\newtheorem{theorem}{Theorem}[section]
\newtheorem{lemma}[theorem]{Lemma}
\newtheorem{proposition}[theorem]{Proposition}
\newtheorem{corollary}[theorem]{Corollary}
\theoremstyle{definition}}
\theoremstyle{definition}\newtheorem{definition}[theorem]{Definition}}
\theoremstyle{definition}}
\numberwithin{equation}{section}
\begin{document}

\def\C{{\mathbb C}}
\def\N{{\mathbb N}}
\def\Z{{\mathbb Z}}
\def\R{{\mathbb R}}
\def\Q{{\mathbb Q}}
\def\K{{\mathbb K}}
\def\F{{\mathcal F}}
\def\H{{\mathcal H}}
\def\E{{\mathcal E}}
\def\B{{\mathcal B}}
\def\rr{{\mathcal R}}
\def\pp{{\mathcal P}}

\def\d{ \{\!\{ }
\def\c{ \}\!\} }
\def\Hom{\rm Hom}
\def\epsilon{\varepsilon}
\def\kappa{\varkappa}
\def\phi{\varphi}
\def\leq{\leqslant}
\def\geq{\geqslant}
\def\slim{\mathop{\hbox{$\overline{\hbox{\rm lim}}$}}\limits}
\def\ilim{\mathop{\hbox{$\underline{\hbox{\rm lim}}$}}\limits}
\def\supp{\hbox{\tt supp}\,}
\def\dim{{\rm dim}\,}
\def\ssub#1#2{#1_{{}_{{\scriptstyle #2}}}}
\def\dimk{{\ssub{\dim}{\K}\,}}
\def\ker{\hbox{\tt ker}\,}
\def\im{\hbox{\tt im}\,}
\def\spann{\hbox{\tt span}\,}
\def\supp{\hbox{\tt supp}\,}
\def\deg{\hbox{\tt \rm deg}\,}
\def\bin#1#2{\left({{#1}\atop {#2}}\right)}
\def\summ{\sum\limits}
\def\maxx{\max\limits}
\def\minn{\min\limits}
\def\limm{\lim\limits}
\def\ootimes{\,{\text{$\scriptstyle\otimes$}}\,}
\def\oo{\otimes}

\def\k{\mathbb K}
\def\lll{\langle}
\def\rrr{\rangle}
\def\dd{\delta}
\def\a{\alpha}
\def\b{\beta}
\def\g{\gamma}
\def\ai{$A_{\infty}$}
\def\l{\langle}
\def\r{\rangle}
\def\lor{k\langle\langle x,y \rangle\rangle}
\def\xd#1{\frac{\partial}{\partial x_{#1}}}

\newcommand{\ff}{\footnote}

\def\C{{\mathbb C}}
\def\N{{\mathbb N}}
\def\Z{{\mathbb Z}}
\def\R{{\mathbb R}}
\def\PP{\cal P}
\def\p{\rho}
\def\phi{\varphi}
\def\ee{\epsilon}
\def\ll{\lambda}
\def\a{\alpha}
\def\bb{\beta}
\def\D{\Delta}
\def\dd{\delta}
\def\g{\gamma}
\def\rk{\text{\rm rk}\,}
\def\dim{\text{\rm dim}\,}
\def\ker{\text{\rm ker}\,}
\def\square{\vrule height6pt width6pt depth 0pt}
\def\epsilon{\varepsilon}
\def\phi{\varphi}
\def\kappa{\varkappa}
\def\strl#1{\mathop{\hbox{$\,\leftarrow\,$}}\limits^{#1}}

\vskip1cm

%\title{The proof of the Kontsevich periodicity conjecture on noncommutative birational %transformations}

\title{Pre-Calabi-Yau algebras
% as noncommutative
  and double Poisson brackets}
%(OR: Formality in higher Hochschild cohomology)}
%Examples of  pre-CY  structures, associated operads and cohomologies}
 %on $A_{\infty}$-algebras}
%Classification of  pre-CY  structures on $A_{\infty}$-algebras}

%{\it or ?: FORMALITY IN HIGHER HOCHSCHLID COHOMOLOGY}

\author{Natalia Iyudu, Maxim Kontsevich, Yannis Vlassopoulos}

%\ff{MAXIM: IF YOU DON'T MIND I WILL SEND THIS VERSION OF THE FIRST PART OF THE TEXT TO %THE ARXIV? THE SECOND PART IS COMING SOON.
%}

%\author{
% N.Iyudu
%  \& M.Kontsevich}

\date{}

\maketitle

\begin{abstract}

%We formulate the notion of pre-Calabi-Yau structure
%for any (including infinite dimensional) algebra
%via higher Hochschild cohomology and consider small quasi-isomorphic subcomplex for %which we prove purity.
%In this subcomplex we prove purity results and
%Using certain basis in this small subcomplex we are able to give a nice combinatorial  %formulation of the pre-Lie structure and corresponding DGLA.
%As a consequence we obtain formality results.
%This construction in particular implies possibility to interpret a pre-Calabi-Yau %structure on free associative algebra as a noncommutative Poisson structure.

We give an explicit formula
showing how the double Poisson algebra introduced in \cite{VdB}  appears as a particular part of a pre-Calabi-Yau structure, i.e. cyclically invariant, with respect to the natural inner form, solution of the Maurer-Cartan equation on $A\oplus A^*$. Specific part of this solution is described, which is in one-to-one correspondence with the double Poisson algebra structures. The result holds for any associative algebra $A$ and emphasizes the  special role  of the fourth component of a pre-Calabi-Yau structure in this respect.
As a consequence we have that appropriate pre-Calabi-Yau structures induce a Poisson brackets on representation spaces $({\rm Rep}_n A)^{Gl_n}$ for any associative algebra $A$.

\end{abstract}

\small \noindent{\bf MSC:} \ \  16A22,
 16S37, 16Y99, 16G99, 16W10, 17B63

\noindent{\bf Keywords:} \ \ A-infinity structure, pre-Calabi-Yau algebra, inner product, cyclic invariance, graded pre-Lie algebras, necklace bracket, Maurer-Cartan equation, double  Poisson brackets.
 %structures, Hochschild (co)homology, L-infinity structures.
%fu, cohomologies of the adjoint actions, formal power series on noncommutative %variables

\section{Introduction}

We consider the structures introduced in
\cite{Ktalk}, \cite{KV}, \cite{Seidel},
%\cite{TZ},
which are cyclically invariant with respect to the natural inner form solutions of the Maurer-Cartan equation on the algebra $A \oplus A^*$, for any graded associative algebra $A$. This structure is called pre-Calabi-Yau algebra. We show, how the double Poisson bracket \cite{VdB}
%introduced by Van den Bergh \cite{VdB} in the search of structure which should induce Poisson bracket on the representation space of an algebra,
appear as a particular part of a pre-Calabi-Yau structure.

It was suggested in \cite{Kf} that cyclic structure on $A_{\infty}$-algebra with respect to  non-degenerate inner form should be considered as a symplectic form on the formal noncommutative manifold.
Here we demonstrate that a natural inner form on $A \oplus A^*$, namely pre-Calabi-Yau structure gives rise to a noncommutative version of a Poisson bracket.

Indeed,  we check that pre-Calabi-Yau structure of appropriate kind on $A$ induce a $Gl_n$ invariant Poisson bracket on the representation spaces  $({\rm Rep}_n A)$ of $A$.
 More precisely, we found the way to associate to a pre-Calabi-Yau structure, defined by an $A_\infty$-structure on
 $A\oplus A^*=(A\oplus A^*, m=\sum\limits_{i=2, i\neq 4}^{\infty}m_i^{(1)})$
 %m_2^{(1)}+m_3^{(1)})$
 a bracket, which satisfies all axioms of the double Poisson algebra \cite{VdB}.
This allows to consider pre-Calabi-Yau structures as a noncommutative version of Poisson structures according to the ideology introduced and developed in \cite{Kf,KR}, saying that noncommutative structure should manifest as a corresponding commutative structure on representation spaces.
We show how double Poisson bracket can be constructed from a pre-CY algebra $A=(A \oplus A^*,  m=\sum\limits_{i=2, i\neq 4}^{\infty}m_i^{(1)})$ with $m_4=0$. This indicates the case when Jacobi identity can be obtained form Maurer-Cartan equation exactly, without any  additional correcting terms. Thus to clarify the picture and to do it for arbitrary algebras we consider this case separately here, in spite the noncmmutative Poisson structure can be obtained from an arbitrary pre-Calabi-Yau structure, without this restriction on $m_4$ (as, for example, we show in \cite{}).

The way we establish the correspondence between the two structures is the following.
First, we associate to a solution of the Maurer-Cartan equation of type B the Poisson algebra structure. Most subtle point here is the choice of the definition  of the bracket via the pre-Calabi-Yau structure.

\begin{theorem}\label{main}
Let we have $A_{\infty}$-structure on $(A\oplus A^*,  m=\sum\limits_{i=2, i\neq 4}^{\infty}m_i^{(1)})$.
Define the bracket by the formula

$$
\langle g\otimes f,\d b,a\c\rangle:=\langle m_3(a,f,b), g \rangle,
$$
where $a,b\in A$, $f,g\in A^*$ and $m_3(a,f,b)=c\in A$ corresponds to the component of type B of the solution to  the Maurer-Cartan, i.e. $m_3$: $A\times A^*\times A\to A$. Then this bracket does satisfy all axioms of the double Poisson algebra.

Moreover, pre-Calabi-Yau structures
 of type $B$ (corresponding to the tensor
$A\otimes A^*\otimes A\otimes A^*$ or $A^*\otimes A\otimes A^*\oo A$) with $m_i=0, i\geq 4$ are in one-to-one correspondence with the double Poisson brackets
$\d\cdot,\cdot\c:A\otimes A\to A\otimes A$ for an  arbitrary associative algebra $A$.

\end{theorem}

Here we concentrate  on the non-graded version of the double Poisson structure. We show how it could be obtained as a part of the solution of the Maurer-Cartan equation
%(with respect to generalised necklace bracket)
on the algebra $A \oplus A^*$, which is already a graded  object  {otherwise the Maurer-Cartan equation would be trivial). Namely, we consider the grading on $R=A \oplus A^*$, where $R_0=A, \,\, R_1=A^*$, and find it quite amazing how graded continuation of an arbitrary non-graded associative algebra can induce an interesting non-graded structure on $A$ itself.
In order the non-graded version of the double Poisson algebra to be induced on associative algebra $A$ (sitting in degree zero) it have to be 2-pre-Calabi-Yau structure in case of  this grading.
Analogous results  for an arbitrary graded associative algebra will be treated elsewhere.

The contents of this paper is somewhat extended version of the  preprint \cite{IK}.
%The graded Poisson double bracket is a part of the pre-Calabi-Yau structure on any %associative algebra $A$.

The structure of the paper is the following.
We explain the notions of strong homotopy associative algebra ($A_\infty$-algebra) and of pre-Calabi-Yau structure on $A_\infty$ or
graded associative algebra in Section~\ref{def}.

%Then in Section~2 we investigate the general structure of equations coming from the %Maurer-Cartan of appropriate arities
%(4 and 5) to be compared with the double Poisson bracket. We spot some peculiarities of %this system of equations,
%where, for example, equations obtained from $MC_5$ never contain both $XX$ and $XY$ %terms. Here $XX$ means the
%composition of two operations of type B, while $XY$ means the composition of operations %of different kinds (of type $A$ and of type $B$).

In Section~\ref{MC45}
 we suggest a way to define a double Poisson bracket out of the part of the
 %we perform main computations to show that parts
  solutions to the Maurer-Cartan equation corresponding to exclusively
operations of the type $A\otimes A^*\otimes A\otimes A^*$, that is of type B.
The axioms  (double Jacobi identity)  of double Poisson bracket obtained in such a way from pre-CY structures with $m_4=0$ are proved. This shows that  pre-CY structures with $m_4=0$ induce the double Poisson bracket. Moreover, for the structures of type B with $m_i=0, i\geq 4$ there is a one-to-one correspondence between those structures and Poisson brackets, defined by our formula from Theorem~\ref{main}.

 %are in one-to-one correspondence with double Poisson brackets.

In Section~\ref{rep} we  discuss how  pre-Calabi-Yau structures via double Poisson bracket induce a Poisson
structures on representation spaces of an arbitrary associative algebra.

\section{Finite and infinite dimensional pre-Calabi-Yau algebras}\label{def}

We deal here with the definition of a d-pre-Calabi-Yau structure on $A_{\infty}$-algebra.
Further in the text we consider mainly pre-Calabi-Yau structures on an associative algebra $A$.
Since in the definition of pre-Calabi-Yau structure the main ingredient is  $A_{\infty}$-structure on $A\oplus A^*$ we start with the definition of $A_{\infty}$-algebra, or {\it strong homotopy associative algebra} introduced by Stasheff \cite{St}.

In fact, there are two accepted conventions of grading of an $A_\infty$-algebra. They differ by a shift in numeration of graded components.
In one convention, we call it {\it Conv.1}, each operation has degree 1, while the other is determined by making the binary operation to be of degree 0, and the degree
of operation of arity $n$, $m_n$ to be $2-n$. This second convention will be called
{\it Conv.0}. If the degree of element $x$ in {\it Conv.0} is  ${\rm deg}x=|x|$, then shifted degree in $A^{sh}=A[1]$, which fall into {\it Conv.1}, will be ${\rm deg^{sh}}x=|x|'$, where  $|x|'=|x|-1$, since $x \in A^i=A[1]^{i+1}$.

  The formulae for the graded Lie bracket, Maurer-Cartan equations and cyclic invariance of the inner form are different in different conventions. Since we mainly will use the {\it Conv.1}, but in a way need {\it Conv.0} as well we sometimes present both of them.

Let $A$ be a $\Z$ graded vector space $A=\mathop{\oplus}\limits_{n \in \Z} A_n$. Let $C^l(A,A)$ be Hochschild cochains $C^l(A,A)=\underline{Hom}_{}(A[1]^{\oo l}, A[1])$, for $l\geq 0$, $C^{\bullet}(A,A)= \prod\limits_{k\geq 1} C^l(A,A).$

On
$C^{\bullet}(A,A)[1]$ there is a natural structure of graded pre-Lie algebra, defined via composition:

$$\circ:
C^{l_1}(A,A) \oo C^{l_2}(A,A) \to C^{l_1+l_2-1}(A,A):$$

$$ f \circ g (a_1 \oo ... \oo a_{l_1+l_2-1}) =$$

 $$\sum (-1)^{| g| \sum\limits_{j=1}^{i-1} |a_j|} f(a_1 \oo ... \oo a_{i-1} \oo g(a_1 \oo ... \oo a_{i+l_2+1}) \oo ... \oo a_{l_1+l_2-1})$$

 The operation $ \circ$ defined in this way does satisfy the graded right-symmetric identity:

 $$(f,g,h)=(-1)^{|g||h|}(f,h,g)$$

 where

 $$(f,g,h)=(f\circ g)\circ h - f\circ (g \circ h).$$

 As it was shown in  \cite{G} the graded commutator on a graded pre-Lie algebra defines a graded Lie algebra structure.

 Thus the Gerstenhaber bracket $[-,-]_G$:

$$[f,g]_G= f\circ g - (-1)^{| f|| g|} g \circ f$$

makes $C^{\bullet}(A)$ into a graded Lie algebra. Equipped with the derivation $d={\rm ad} \,\, m_2$, $\,\,\, (C^{\bullet}(A), m_2)$ becomes a DGLA, which is a Hochschild cohomological complex.

 With respect to the Gerstenhaber bracket $[-,-]_G$  we have  the Maurer-Cartan equation

\begin{equation}\label{MC}
[m^{(1)},m^{(1)}]_G=\sum\limits_{p+q=k+1} \sum\limits_{i=1}^{p-1} (-1)^\epsilon m_p(x_1,\dots,x_{i-1},m_q(x_j,\dots,x_{i+q-1}),\dots,x_k)=0,
\end{equation}
where
$$
\epsilon=|x_1|'+{\dots}+|x_{i-1}|',\qquad |x_i|'=|x_i|-1={\rm deg}x_i-1
$$

%Here it is necessary to note the following.
%In fact...
 % We will use more convenient formulae of {\it Conv.1} then for $A[1]$.
%From now on we  adopt the second one, and will work with the grading %on $A$, where whole $A$ is sitting in the zero component.

%In order for the type $B$ operations (the most interesting part of the solution of  %Maurer-Cartan, which is a ternary operation) to make sense, i.e. according to the {\it %Conv.0}, to be of degree $-1$, we need $A^*$ to be in
%the component of degree $1$. That is, $R=A\oplus A^*$ is graded by $R_0=A$ and %$R_1=A^*$.

% i.e. according to the {\it Conv.1}, to be of degree $1$, we need $A^*$ to be in
%the component of degree $1$. That is, $R=A\oplus A^*$ is graded by $R_0=A$ and %$R_1=A^*$.

The  Maurer-Cartan in  {\it Conv.0} is:

\begin{equation}\label{MC0}
[m^{(1)},m^{(1)}]=\sum\limits_{p+q=k+1} \sum\limits_{i=1}^{p-1} (-1)^\epsilon m_p(x_1,\dots,x_{i-1},m_q(x_j,\dots,x_{i+q-1}),\dots,x_k)=0,
\end{equation}
where
$$
\epsilon=i(q+1)+q(|x_1|+{\dots}+|x_{i-1}|,
%\qquad |x_i|'={\rm deg}x_i-1
$$

\begin{definition} An element $m^{(1)} \in C^{\bullet}(A,A)[1]$ which satisfies the Maurer-Cartan equation $[m^{(1)}, m^{(1)}]_G$ with respect to the Gerstenhaber bracket $[-,-]_G$ is called an
$A_{\infty}$-structure on $A$.

\end{definition}

Equivalently, it can be formulated in a more compact way as a coderivation on the coalgebra of the bar complex of $A$.

%*************
%Let us consider a bar complex $BC$ of algebra $A$. It has a structure of coalgebra
%in a natural way: $\Delta: BC \to BC \oplus BC$ can be defined as follows:

%Suppose we are given a bunch of n-ary maps for any n:

%$m_n: $

%we can construct  from them a map

%$d:$

%The fact that d is coderivation is equivalent to the fact that initial bunch of maps %were satisfying Maurer-Cartan equation.

%Thus the fact that d is coderivation can serve as a definition of an %$A_{\infty}$-algebra.
%**********

In particular, associative algebra with zero derivation $A=(A, m=m_2^{(1)})$ is an example of $A_{\infty}$-algebra. The  component of the Maurer-Cartan equation of arity 3, $MC_3$ will say that the binary operation of this structure, the multiplication $m_2$ is associative:

$$(ab)c-a(bc)=dm_3(a,b,c)+(-1)^{\sigma}m_3(da,b,c)+(-1)^{\sigma}m_3(a,db,c)+(-1)^{\sigma}m_3(a,b,dc)$$

We can give now definition of pre-Calabi-Yau structure (in $Conv.1$).

\begin{definition}\label{pcy1} A d-pre-Calabi-Yau structure on a finite dimensional $A_{\infty}$-algebra $A$ is

 (I). an $A_{\infty}$-structure on $A \oplus A^*[1-d]$,

 (II). cyclic invariant with respect to natural non-degenerate pairing on $A \oplus A^*[1-d]$,
meaning:

$$\l m_n(\a_1,...,\a_n), \a_{n+1}\r=(-1)^{|\a_1|'(|\a_2|'+...+|\a_{n+1}|')}\l m_n(\a_2,...\a_{n+1}),\a_1)\r$$
where the inner form $\l,\r$ on $A+A^*$   is defined naturally as
%$<,>: A+A^*[1] \to  A+A^*[1]$  is defined as
$\l (a,f),(b,g)\r=f(b)+(-1)^{|g|' |a|'} g(a)$ for $a,b \in A, f,g \in A^*$
%and $(-1)^{\sigma}$ stands for the sign assigned according to the Koszul rule, that is $\sigma = |\a_1|'(|\a_2|'+...+|\a_{n+1}|').$

(III)
and such that $A$ is $A_{\infty}$-subalgebra in $A \oplus A^*[1-d]$.
\end{definition}

%\bigskip

The signs in this definition written in {\it Conv.1}, are
assigned according to the Koszul rule. It is not quite the case for {\it Conv.0}, where
the
cyclic invariance with respect to the natural non-degenerate pairing on $A \oplus A^*[1-d]$, from (II) sounds:

$$\l m_n(\a_1,...,\a_n), \a_{n+1}\r=(-1)^{n+|\a_1|'(|\a_2|'+...+|\a_{n+1}|')}\l m_n(\a_2,...\a_{n+1}),\a_1\r$$

The appearance of the arity $n$, which influence the sign in this formula, does not really fit with the Koszul rule, this is the feature of the {\it Conv.0}, and this is why it is more convenient to work with the {\it Conv.1}.

As we will need to refer to these later, let us define separately the cyclic invariance condition and inner form symmetricity in {\it Conv.1}:

\begin{equation}\label{cyc1}
\l m_n(\a_1,...,\a_n), \a_{n+1}\r=(-1)^{|\a_1|'(|\a_2|'+...+|\a_{n+1}|')}\l m_n(\a_2,...\a_{n+1}),\a_1)\r
\end{equation}

\begin{equation}\label{sym2}
\langle x,y \rangle = - (-1)^{|x|'\,|y|'} \langle y,x \rangle
\end{equation}

The notion of pre-Calabi-Yau algebra introduced in
\cite{KV}, \cite{Seidel}
%\cite{TZ} seemingly
use the fact that $A$ is finite dimensional, since there is no natural grading on the dual algebra $A^*={\rm Hom}(A,\K)$, induced form the grading on $A$
 %is not immediately  straightforward
 in  infinite dimensional case.
The general definition suitable for infinite dimensional algebra was given in \cite{KV}, \cite{Ktalk}, and it is equivalent to the  definition, where the ${\rm Hom}(A,\K)$ is substituted with the graded version: $A^*=\oplus (A_n)^*=\underline{{\rm Hom}}(A,\K)$, if graded components are finite dimensional.
We will give this general definition and show the equivalence further in this section.

%{\bf Remark}
%The [1-d] degree shift in the definition just ensure the right connection with the %cyclic CY structures, namely with this shift any  cyclic CY structure of dimension d  %is also a pre-Calabi-Yau structure of dimension d.

%TTT Explanations 3.2.1 Yann

{\bf Example.}  The most simple example of pre-Calabi-Yau structure demonstrates that this structure does exist on any associative algebra. Namely, the structure of associative algebra on $A$ can be extended to the associative structure on $A\oplus A^*[1-d]$ in such a way, that the natural inner form is (graded)cyclic with respect to this multiplication. This amounts to the following fact: for any $A$-bimodule $M$ the associative multiplication on $A \oplus M$ is given  by
$ (a+f)(b+g)=ab+af+gb.$ In this simplest situation both structures on $A$ and on $A+A^*$ are in fact associative algebras.
%\end{Example}
More examples one can find in \cite{I},
 \cite{Rub}, \cite{DK}.

One can reformulate the above definition without $A^*$, using the inner product, to change inputs and outputs of operations, and by this to substitute $A^*$ with $A$, as it was done in \cite{KV}. The $A_{\infty}$-structure on $A \oplus A^*$ means first of all the bunch of linear maps
$$m_N: (A \oplus A^*)^N \to A \oplus A^*.$$
Such a map splits as a collection of linear maps of the type

$$\xi= m_{p_1,...,p_l}^{q_1,...,q_l}: A^{\oo p_1} \oo A^{*\oo q_1} \oo ... \oo A^{\oo p_l} \oo A^{*\ootimes q_l} \to A\, ({\rm or}\, A^*)$$
where $\sum p_i + q_i=N$, $0 \leq p_i \leq N$.

These could be interpreted,  using the inner product, as  tensors of the type,
$ A^{\oo p_1} \oo A^{*\oo q_1} \oo ... \oo A^{\oo p_l} \oo A^{*\ootimes q_l}, $
$\sum p_i+q_i=N+1,$
and graphically depicted as operations where incoming edges correspond to  elements of $A$,  outgoing edges to elements of $A^*$ and the marked point correspond to  the output of operation $m_{p_1,...,p_l}^{q_1,...,q_l}$. (Of course, due to cyclic invariance operations with different marked points are equal up to a sign, but to keep track of the signs, marked point is needed).
This gives rise to the definition below.

First, we shell define
{\it higher Hochschild cochains}  and {\it generalised necklace bracket}.

\begin{definition}\label{hh} For $k\geq 1$ the space of {\it k-higher Hochschild cochains} is defined as

$$
C^{(k)}(A):= \prod_{r \geq 0} \underline{Hom}(A[1]^{\oo r}, A[1]^{\oo k})
$$

$$
=\prod_{r_1,...,r_k \geq 0} \underline{Hom}(\mathop{\otimes}\limits_{i=1}^k A[1]^{\oo r_i}, A[1]^{\oo k})
$$

Denote by $C^{(\bullet)}(A)= \prod_{k \geq 1}  C^{(k)}(A)$ the space of all higher Hochschild cochains.
\end{definition}

Note, that  $C^{(1)}(A)= C^{\bullet}(A,A)$ is the space of usual Hochschild cochains.

We can see that element of the higher Hochschild cochain
can be interpreted as
%depicted above
operation with $r$ incoming  and $k$ outgoing edges. There is a marked point in the picture as well, and due to  the cyclic invariance condition one can move this marked point with the change of a sign. Indeed, suppose that the last position is marked, then it can be moved to the one but last using the formula:

 $$(-1)^{|x_n|(|x_1|+...+|x_{n-1}|)}<x_1,m(x_2,...,x_n)>= <x_n,m(x_1,...,x_{n-1})>$$
where $x_i \in A \,\, {\rm or} \,\, A^*.$
Thus, just higher  Hochschild cochains, without a specified point will appear in the definition of pre-Calabi-Yau structure.

 The composition of two operations of this kind translates
  %from composition of operations on A,
  according to definition~\ref{pcy1} to the explained above picture  via the notion of generalised necklace bracket:

 \begin{definition}\label{neckl} The {\it generalised necklace bracket} between two elements $f,g \in C^{(k)}(A)$ is given as
  $[f,g]_{gen.neckl}= f\circ g - (-1)^{\sigma} g \circ f,$ where composition $f\circ g$ consists of inserting all outputs of $g$ to all inputs from $f$ with signs assigned according to the Koszul rule.
  \end{definition}

Again, since the defined above composition $f\circ g$ makes $C^{(\bullet)}$ into a graded pre-Lie algebra, the generalised necklace bracket obtained from it as a graded commutator, makes $C^{(\bullet)}$ into a graded Lie algebra.

\begin{definition}\label{pcy2}
Let $A$ be  a $\Z$-graded space $A=\oplus A_n$. The pre-Calabi-Yau structure on $A$ is an element from the space of higher Hochschild cochains $C^{(\bullet)}$, $m=\sum_{k\geq 0} m^{(k)}$, $m^{(k)} \in C^{(k)}(A)$, which is a  solution to the Maurer-Cartan equation $[m,m]_{gen.neckl}=0$ with respect to generalised necklace bracket.

\end{definition}

Any such solution makes $C^{(\bullet)}(A)$ into a DGLA with the differential ${\rm ad} \, m$.

\begin{definition}\label{pcy3}
The pre-Calabi-Yau structure on a $\Z$-graded space $A=\oplus A_n$ is a cyclically invariant $A_{\infty}$-structure on $A \oplus A^*[1-d]$, where $A^*$ is understood as  $A^*=\oplus (A_n)^*=\underline{{\rm Hom}}(A,\K)$.
\end{definition}

\begin{proposition}
The definitions of pre-Calabi-Yau structures \ref{pcy2} and \ref{pcy3}  are equivalent, when dim$A_n<\infty\,\, \forall n$.
\end{proposition}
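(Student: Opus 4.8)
The plan is to set up an explicit slot-by-slot dictionary between the two descriptions and to check that it carries the Gerstenhaber bracket on $C^{\bullet}(A\oplus A^*[1-d])$ into the generalised necklace bracket on $C^{(\bullet)}(A)$, after which the two Maurer-Cartan conditions coincide termwise. The role of the hypothesis $\dim A_n<\infty$ is precisely to guarantee that the graded dual is well-behaved, namely that the natural map $A\to (A^*)^*$ is an isomorphism; this is what allows the non-degenerate pairing $\langle\,,\,\rangle$ on $A\oplus A^*[1-d]$ to identify $(A\oplus A^*)^*$ with $A\oplus A^*$ and to convert inputs of an operation freely into outputs.

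First I would establish the correspondence on the level of individual operations. Starting from a cyclically invariant $A_\infty$-structure as in Definition~\ref{pcy3}, each operation $m_N\colon (A\oplus A^*[1-d])^{\otimes N}\to A\oplus A^*[1-d]$ is encoded, via the pairing, as the cyclic tensor $T(\a_1,\dots,\a_{N+1})=\langle m_N(\a_1,\dots,\a_N),\a_{N+1}\rangle$, and the cyclic invariance~(\ref{cyc1}) says exactly that $T$ is invariant, up to the Koszul sign, under cyclic rotation of its $N+1$ arguments. Sorting the arguments by whether each $\a_i$ lies in $A$ or in $A^*$, and using the pairing to read every $A^*$-slot as an outgoing edge valued in $A$ and every $A$-slot as an incoming edge, one recovers precisely a component $\underline{Hom}(A[1]^{\otimes r},A[1]^{\otimes k})$ of $C^{(k)}(A)$, with $k$ counting the outgoing ($A^*$) slots and $r$ the incoming ($A$) slots. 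This is the passage already indicated before Definition~\ref{hh}, and the finite-dimensional hypothesis is what upgrades it from a map to a bijection. Conversely, any $m=\sum_k m^{(k)}$ in $C^{(\bullet)}(A)$ reassembles, slot by slot, into a family $\{m_N\}$ on $A\oplus A^*[1-d]$, and the ability to move the marked point at the cost of a sign (the displayed identity before Definition~\ref{neckl}) shows that the structure so obtained is automatically cyclically invariant, so that cyclic invariance is built into the $C^{(\bullet)}$ formulation.

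Next I would verify that the two brackets agree under this dictionary. The Gerstenhaber composition inserts the output of one operation into an input of another; after transport through the pairing an $A^*$-output becomes an outgoing edge, and the insertion becomes the gluing of an outgoing edge of $g$ to an incoming edge of $f$. Summing over all insertion positions on both sides is exactly the content of the generalised necklace composition $f\circ g$ of Definition~\ref{neckl}, and antisymmetrising yields $[\,,\,]_{gen.neckl}$. The only genuine labour is sign bookkeeping: one must check that the Koszul signs produced by~(\ref{cyc1}) together with the symmetry~(\ref{sym2}) of the inner form, once pushed through the identification of $A^*$-slots with $A$-slots, reproduce the signs in the necklace bracket. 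I expect this sign comparison to be the main obstacle, since it is where the shift $[1-d]$, the two conventions \emph{Conv.1} versus \emph{Conv.0}, and the cyclic reordering all interact.

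Finally, granting that the brackets correspond, the equivalence of the Maurer-Cartan equations is immediate: $[m,m]_G=0$ on $C^{\bullet}(A\oplus A^*[1-d])$ restricted to cyclically invariant elements is termwise the same equation as $[m,m]_{gen.neckl}=0$ on $C^{(\bullet)}(A)$. Hence a datum satisfying Definition~\ref{pcy3} produces one satisfying Definition~\ref{pcy2} and vice versa, and the two assignments are mutually inverse by the slot-by-slot matching of the first step, which proves the proposition.
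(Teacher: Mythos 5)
Your proposal is correct and takes essentially the same approach as the paper: both proofs use the natural non-degenerate pairing together with the identification $A^{**}=A$ (this is exactly where the hypothesis $\dim A_n<\infty$ enters) to build a slot-by-slot dictionary between higher Hochschild cochains with a marked point and operations $m_n$ on $A\oplus A^*[1-d]$, with cyclic invariance corresponding to the freedom to move the marked point. If anything, your outline is more complete than the paper's own argument, which only writes out the dictionary in one direction and, like you, does not carry out the sign bookkeeping for the comparison of the Gerstenhaber and generalised necklace brackets.
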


%pic.

%*****

%Graphically these cyclically invariant operations could be depicted  as follows.

%\def\vvv{\vrule width0pt height1.2cm depth 1.2cm}
%\begin{pspicture}(0,0)(7,4)
%\pscircle[linewidth=1pt](5,2){0.7}
%\pscircle[linewidth=1pt](5,2){0.5}
%\psline[linewidth=1.5pt]{->}(5,0.5)(5,1.3)
%\psline[linewidth=1.5pt]{->}(5,2.7)(5,3.5)
%\psline[linewidth=1.5pt]{->}(3.5,2)(4.3,2)
%\psline[linewidth=1.5pt]{->}(6.5,2)(5.7,2)
%\psline[linewidth=1.5pt]{->}(6.06,3.06)(5.495,2.495)
%\psline[linewidth=1.5pt]{->}(6.06,0.94)(5.495,1.505)
%\psline[linewidth=1.5pt]{->}(3.94,3.06)(4.505,2.495)
%\psline[linewidth=1.5pt]{->}(3.94,0.94)(4.505,1.505)
%\put(0,2){$\displaystyle u=\sum a_{i,j}^k\qquad i\left\{\vvv\right.$}
%\put(6.7,2){$\left.\vvv\right\}\ \!\!j$}
%\put(4.8,3.6){$A_1$}
%\put(4.8,0.1){$A^*$}
%\put(2.5,3.2){$A_1$}
%\put(7,3.2){$A_1$}
%\end{pspicture}

\begin{proof}
To demonstrate this we will start with an element $m=\sum_{k\geq 0} m^{(k)}$, $m^{(k)} \in C^{(k)}(A)$,  $m^{(k)}=A^{\oo r} \to A^{\oo k} $ depicted as an operation with r incoming arrows, k outgoing arrows, and one marked point.

From this data we construct  a collection
of operations $m_n:(A\oplus A^*)^{\otimes n}\to A\oplus A^*$,  to form an $A_{\infty}$-structure on $A \oplus A^*$.

%Let us demonstrate this by constructing form any element $m=\sum m^{(k)}$, $m^{(k)}\in %C^{(k)}(A)$, where $m^{(k)}$
%consists of a collection of operations $:A^{\otimes r}\to A^{\otimes k}$ with one %specified entry, a collection
%of operations $m_n:(A\oplus A^*)^{\otimes n}\to A\oplus A^*$.

%We can then check that if
%$m^{(k)}:A^{\otimes r}\to A^{\otimes k}$ satisfy Maurer-Cartan equation with respect to necklace %bracket,  the operations
%$m_n:(A\oplus A^*)^{\otimes n}\to A\oplus A^*$ form an $A_\infty$ structure on $A\oplus %A^*$.

So let us have an element $\xi$ in the tensor product (in some order) of $r$ copies of $A$ and $k$ copies of $A^*$,
where the last position is specified.
%(we shall underline it).
Thus we have an operation $E:A^{\otimes r}\to A^{\otimes k}$
with one fixed entry. This defines an element $\widehat\xi\in (A^*)^{\otimes r}\otimes A^{\otimes k}$ (by means of
the natural pairing) such that

$$
\langle E(a_1\oo...\oo a_r), f_1\oo...\oo f_k \rangle = \langle \widehat\xi, a_1\oo...\oo a_r \oo f_1\oo...\oo f_k \rangle
$$
Note that here we use the equality $A^{**}=A$, which is true only for finite dimensional spaces.  We should make sure
that we use duals satisfying $A^{**}=A$, as it is done in definition~\ref{pcy3}, when graded components are finite dimensional.

Now we can define an operation from the $A_\infty$-structure on $A\oplus A^*$ corresponding to the above operation $E$,
$$
m_{n-1}(a_1,f_1,\dots,\widehat{f_k})
$$
if the marked point have an outgoing edge and
$$
m_{n-1}(a_1,f_1,\dots,\widehat{a_r})
$$
if the marked point has an incoming edge. Here $n=k+r$ and the order of entries of elements from $A$ and from $A^*$ is dictated by the order in $\xi$.
In these two cases we define $m_{n-1}$ as follows:
\begin{align*}
&\langle f_k,m_{n-1}(a_1,f_1,\dots,\widehat{f_k})\rangle =\langle \widehat\xi,a_1\otimes\dots\otimes a_r\otimes f_1\otimes\dots\otimes f_k\rangle;
\\
&\langle a_r,m_{n-1}(a_1,f_1,\dots,\widehat{a_r})\rangle =\langle \widehat\xi,a_1\otimes\dots\otimes a_r\otimes f_1\otimes\dots\otimes f_k\rangle.
\end{align*}

%Check MC.
\end{proof}

In spite  definition \ref{pcy2} looks more beautiful and reveals nice graphically presented  connection with A-infinity structure,  we will use definition \ref{pcy3},
since we find it easier to work with and make sure all details are correct.

\section{Structure of the Maurer-Cartan equations }\label{str}

The general Maurer-Cartan equations on $C=A\oplus A^*$ for the operations $m_n:C[1]^n\to C[1]$ have the shape
$$
\sum_{p+q=k+1} \sum\limits_{i=1}^{p-1} (-1)^\epsilon m_p(x_1,\dots,x_{i-1},m_q(x_j,\dots,x_{i+q-1}),\dots,x_k),
$$
where
$$
\epsilon=|x_1|'+{\dots}+|x_{i-1}|',\qquad |x_s|'={\rm deg}x_s-1
$$

%There are two accepted conventions of grading of an $A_\infty$ algebra. They differ by %a shift in numeration of graded components.
%In one convention, each operation has degree 1, while the other is determined by making %the binary operation to be of degree 0. In this convention the degree
%of $m_n$ is $2-n$. From now on we  adopt the second one, and will work with the grading %on $A$, where whole $A$ is sitting in the zero component. In order for the type $B$ %operations (the most interesting part of the solution of  Maurer-Cartan, which is a %ternary operation) to make sense, i.e. according to the convention, to be of degree %$-1$, we need $A^*$ to be in
%the component of degree $1$. That is, $R=A\oplus A^*$ is graded by $R_0=A$ and %$R_1=A^*$.

%Now t
The equations we get from the Maurer-Cartan in arities  four and five, which are relevant for comparing with the Leibniz and Jacibi identities for the double bracket, will look as follows.

In arity 4, Maurer-Cartan equation, $MC_4$ reads:

$$  m_3(x_1x_2, x_3, x_4)+ (-1)^{|x_1|'} m_3(x_1,x_2x_3, x_4)+ (-1)^{|x_1|'+|x_2|'}m_3(x_1,x_2,x_3 x_4)+ $$
$$(-1)^{|x_1|'}m_2(x_1, m_3(x_2,x_3, x_4))+ m_2(m_3(x_1,x_2,x_3), x_4)=0$$

%$$
%m_3(x_1x_2, x_3, x_4) + (-1)^{{\rm deg}x_1} m_3(x_1, x_2 x_3, x_4) +
%(-1)^{{\rm deg}x_1 + {\rm deg} x_2} m_3(x_1, x_2, x_3 x_4)
%$$

%$$
%+(-1)^{{\rm deg}x_1} x_1 m_3(x_2, x_3, x_) +  m_3(x_1, x_2 x_3) x_4 = 0
%$$

In arity 5, Maurer-Cartan equation, $MC_4$ reads:

$$
 m_3(m_3(x_1, x_2,x_3), x_4, x_5) + (-1)^{|x_1|'} m_3(x_1, m_3( x_2,x_3, x_4), x_5)
+ $$
$$(-1)^{|x_1|'+|x_2|'} m_3(x_1, x_2, m_3( x_3, x_4, x_5) )=0
$$

Operations of arity 4  are absent due to our condition  that $m_4=0$
%only second, third and higher multiplications are present
in $A_{\infty}$-structure on $A \oplus A^*$.
%since we are looking for the structure of the form $m=m_2^{(1)}+m_3^{(1)}$.

Since we have Maurer-Cartan equations on $A\oplus A^*$, it essentially means that any equation splits into the set of equations
with various distributions of inputs/outputs from $A$ and $A^*$.
Note that solutions of the Maurer-Cartan which are interesting for us correspond to operations $A \oo A \to A\oo A$ (which can serve as a double bracket). These are operations from tensors with exactly two $A$th and two $A^*$th.

Remind that an operation, say, $A^*\times A^*\times A\to A^*$ can be naturally interpreted as an element of the space $A\otimes A\otimes A^*\otimes A^*$
and this tensor due to cyclic invariance of the structure equals to its cyclic permutations up to sign, in this case $A^*\otimes A\otimes A\otimes A^*$,
$A^*\otimes A^*\otimes A\otimes A$ and $A\otimes A^*\otimes A^*\otimes A$. There is another type of tensor from $A\otimes A^*\otimes A\otimes A^*$ for which
there is only one cyclic permutation $A^*\otimes A\otimes A^*\otimes A$.
Due to cyclic invariance
$$
\langle m_3(f,a,g),b\rangle = \pm\langle m_3(b,f,a),g\rangle
$$
operation  $A^*\times A \times A^*\to A^*$ corresponding to tensor $A\otimes A^*\otimes A\otimes A^*$ is the same as operation  $A\times A^*\times A \to A$ corresponding to tensor $A^*\otimes A \otimes A^*\otimes A.$ These tensors encode the second type of operations.

Two types of operations mentioned above which are different up to cyclic permutation on tensors will serve as variables in the equations we obtain from the Maurer-Cartan.

Let us list 6 tensors corresponding to 2 types of operations, of which we will think as of  two types of  {\it main variables} in MC equations.

$$
\begin{array}{llll}
{\rm Type \,\, A}&&{\rm Type \,\, B}&\\
A^*\otimes A\otimes A\otimes A^*,&A\times A^*\times A^*\to A^*,&&\\
A^*\otimes A^*\otimes A\otimes A,&A\times A\times A^*\to A,&A\otimes A^*\otimes A\otimes A^*,&A^*\times A\times A^*\to A^*,\\
A\otimes A^*\otimes A^*\otimes A,&A^*\times A\times A\to A,&A^*\otimes A\otimes A^*\otimes A,&A\times A^*\times A\to A,\\
A\otimes A\otimes A^*\otimes A^*,&A^*\times A^*\times A\to A^*.  &&
\end{array}
$$

\begin{definition} We say that operations corresponding to the tensor $A\otimes A\otimes A^*\otimes A^*$ (and its cyclic permutations) are operations of {\it type A}, and operations corresponding to the tensor $A\otimes A^*\otimes A\otimes A^*$ (and its cyclic permutations) are operations of {\it type B}.
\end{definition}

These two variables, being cyclicly invariant tensors can be depicted as follows:

Graphically these cyclically invariant operations could be depicted  as follows for type A and B respectively.

\def\vvv{\vrule width0pt height1.2cm depth 1.2cm}
\begin{pspicture}(0,0)(13,4)
\pscircle[linewidth=1pt](5,2){0.7}
\pscircle[linewidth=1pt](5,2){0.5}
\psline[linewidth=1.5pt]{->}(5,0.5)(5,1.3)
\psline[linewidth=1.5pt]{->}(5,2.7)(5,3.5)
\psline[linewidth=1.5pt]{->}(3.5,2)(4.3,2)
\psline[linewidth=1.5pt]{<-}(6.5,2)(5.7,2)
%\psline[linewidth=1.5pt]{->}(6.06,3.06)(5.495,2.495)
%\psline[linewidth=1.5pt]{->}(6.06,0.94)(5.495,1.505)
%\psline[linewidth=1.5pt]{->}(3.94,3.06)(4.505,2.495)
%\psline[linewidth=1.5pt]{->}(3.94,0.94)(4.505,1.505)
%\put(0,2){$\displaystyle u=\sum a_{i,j}^k\qquad i\left\{\vvv\right.$}
%\put(6.7,2){$\left.\vvv\right\}\ \!\!j$}
\put(6.6,1.9){$A$}
\put(3.1,1.9){$A^*$}
\put(4.8,3.6){$A$}
\put(4.8,0.1){$A^*$}
%\put(2.5,3.2){$A_1$}
%\put(7,3.2){$A_1$}
\pscircle[linewidth=1pt](11,2){0.7}
\pscircle[linewidth=1pt](11,2){0.5}
\psline[linewidth=1.5pt]{<-}(11,0.5)(11,1.3)
\psline[linewidth=1.5pt]{->}(11,2.7)(11,3.5)
\psline[linewidth=1.5pt]{->}(9.5,2)(10.3,2)
\psline[linewidth=1.5pt]{->}(12.5,2)(11.7,2)
%\psline[linewidth=1.5pt]{->}(6.06,3.06)(5.495,2.495)
%\psline[linewidth=1.5pt]{->}(6.06,0.94)(5.495,1.505)
%\psline[linewidth=1.5pt]{->}(3.94,3.06)(4.505,2.495)
%\psline[linewidth=1.5pt]{->}(3.94,0.94)(4.505,1.505)
%\put(0,2){$\displaystyle u=\sum a_{i,j}^k\qquad i\left\{\vvv\right.$}
%\put(6.7,2){$\left.\vvv\right\}\ \!\!j$}
\put(12.6,1.9){$A^*$}
\put(9.1,1.9){$A^*$}
\put(10.8,3.6){$A$}
\put(10.8,0.1){$A$}
%\put(2.5,3.2){$A_1$}
%\put(7,3.2){$A_1$}
\end{pspicture}

%depicts the A-type and the B-type is depicted:

%\def\vvv{\vrule width0pt height1.2cm depth 1.2cm}
%\begin{pspicture}(0,0)(7,4)
%\pscircle[linewidth=1pt](5,2){0.7}
%\pscircle[linewidth=1pt](5,2){0.5}
%\psline[linewidth=1.5pt]{->}(5,0.5)(5,1.3)
%\psline[linewidth=1.5pt]{->}(5,2.7)(5,3.5)
%\psline[linewidth=1.5pt]{->}(3.5,2)(4.3,2)
%\psline[linewidth=1.5pt]{->}(6.5,2)(5.7,2)
%\psline[linewidth=1.5pt]{->}(6.06,3.06)(5.495,2.495)
%\psline[linewidth=1.5pt]{->}(6.06,0.94)(5.495,1.505)
%\psline[linewidth=1.5pt]{->}(3.94,3.06)(4.505,2.495)
%\psline[linewidth=1.5pt]{->}(3.94,0.94)(4.505,1.505)
%\put(0,2){$\displaystyle u=\sum a_{i,j}^k\qquad i\left\{\vvv\right.$}
%\put(6.7,2){$\left.\vvv\right\}\ \!\!j$}
%\put(4.8,3.6){$A_1$}
%\put(4.8,0.1){$A^*$}
%\put(2.5,3.2){$A_1$}
%\put(7,3.2){$A_1$}
%\end{pspicture}

But we will mainly use for calculations the above row notations, since they are more suitable for following the signs, which are crucially important in some of our calculations, for example, for the result on one-to-one correspondence between part of pre-CY structure and a double Poisson structure.

%there is only one cyclic permutation $A^*\otimes A\otimes A^*\otimes A$. THese two %encode another type of operations which will serve as variables in the
%equations we obtain from MC.

%DEF Type A, Type B.

The other operations which are also variables in the Maurer-Cartan equation, correspond to the tensors of length four, containing not exactly two $A$ and two $A^*$. We call them
{\it secondary type}
variables, as opposed to the main type, consisting of variables of type $A$ and $B$.
 So, {\it secondary type}
variables correspond to the cyclic invariant tensors with one $A^*$:
$A\otimes A\otimes A\otimes A^*$, we call this $C1$  or with one $A$: $A^*\otimes A^*\otimes A^*\otimes A$, this we call $C2$ (there are eight corresponding operations), as well as two operations corresponding to each of  tensors $A\otimes A\otimes A\otimes A$ and $A^*\otimes A^*\otimes A^*\otimes A^*$, which are variables $C3$ and $C4$ respectively.

Graphically these cyclically invariant operations could be depicted  as follows.

\vskip0.5cm

$C_1, C_2:$

\vskip0.5cm

\def\vvv{\vrule width0pt height1.2cm depth 1.2cm}
\begin{pspicture}(0,0)(13,4)
\pscircle[linewidth=1pt](5,2){0.7}
\pscircle[linewidth=1pt](5,2){0.5}
\psline[linewidth=1.5pt]{<-}(5,0.5)(5,1.3)
\psline[linewidth=1.5pt]{->}(5,2.7)(5,3.5)
\psline[linewidth=1.5pt]{<-}(3.5,2)(4.3,2)
\psline[linewidth=1.5pt]{<-}(6.5,2)(5.7,2)
%\psline[linewidth=1.5pt]{->}(6.06,3.06)(5.495,2.495)
%\psline[linewidth=1.5pt]{->}(6.06,0.94)(5.495,1.505)
%\psline[linewidth=1.5pt]{->}(3.94,3.06)(4.505,2.495)
%\psline[linewidth=1.5pt]{->}(3.94,0.94)(4.505,1.505)
%\put(0,2){$\displaystyle u=\sum a_{i,j}^k\qquad i\left\{\vvv\right.$}
%\put(6.7,2){$\left.\vvv\right\}\ \!\!j$}
\put(6.6,1.9){$A$}
\put(3.1,1.9){$A$}
\put(4.8,3.6){$A$}
\put(4.8,0.1){$A$}
%\put(2.5,3.2){$A_1$}
%\put(7,3.2){$A_1$}
\pscircle[linewidth=1pt](11,2){0.7}
\pscircle[linewidth=1pt](11,2){0.5}
\psline[linewidth=1.5pt]{->}(11,0.5)(11,1.3)
\psline[linewidth=1.5pt]{<-}(11,2.7)(11,3.5)
\psline[linewidth=1.5pt]{->}(9.5,2)(10.3,2)
\psline[linewidth=1.5pt]{->}(12.5,2)(11.7,2)
%\psline[linewidth=1.5pt]{->}(6.06,3.06)(5.495,2.495)
%\psline[linewidth=1.5pt]{->}(6.06,0.94)(5.495,1.505)
%\psline[linewidth=1.5pt]{->}(3.94,3.06)(4.505,2.495)
%\psline[linewidth=1.5pt]{->}(3.94,0.94)(4.505,1.505)
%\put(0,2){$\displaystyle u=\sum a_{i,j}^k\qquad i\left\{\vvv\right.$}
%\put(6.7,2){$\left.\vvv\right\}\ \!\!j$}
\put(12.6,1.9){$A^*$}
\put(9.1,1.9){$A^*$}
\put(10.8,3.6){$A^*$}
\put(10.8,0.1){$A^*$}
%\put(2.5,3.2){$A_1$}
%\put(7,3.2){$A_1$}
\end{pspicture}

\vskip0.5cm

$C_3, C_4:$

\vskip0.5cm

\def\vvv{\vrule width0pt height1.2cm depth 1.2cm}
\begin{pspicture}(0,0)(13,4)
\pscircle[linewidth=1pt](5,2){0.7}
\pscircle[linewidth=1pt](5,2){0.5}
\psline[linewidth=1.5pt]{<-}(5,0.5)(5,1.3)
\psline[linewidth=1.5pt]{->}(5,2.7)(5,3.5)
\psline[linewidth=1.5pt]{->}(3.5,2)(4.3,2)
\psline[linewidth=1.5pt]{<-}(6.5,2)(5.7,2)
%\psline[linewidth=1.5pt]{->}(6.06,3.06)(5.495,2.495)
%\psline[linewidth=1.5pt]{->}(6.06,0.94)(5.495,1.505)
%\psline[linewidth=1.5pt]{->}(3.94,3.06)(4.505,2.495)
%\psline[linewidth=1.5pt]{->}(3.94,0.94)(4.505,1.505)
%\put(0,2){$\displaystyle u=\sum a_{i,j}^k\qquad i\left\{\vvv\right.$}
%\put(6.7,2){$\left.\vvv\right\}\ \!\!j$}
\put(6.6,1.9){$A$}
\put(3.1,1.9){$A^*$}
\put(4.8,3.6){$A$}
\put(4.8,0.1){$A$}
%\put(2.5,3.2){$A_1$}
%\put(7,3.2){$A_1$}
\pscircle[linewidth=1pt](11,2){0.7}
\pscircle[linewidth=1pt](11,2){0.5}
\psline[linewidth=1.5pt]{->}(11,0.5)(11,1.3)
\psline[linewidth=1.5pt]{<-}(11,2.7)(11,3.5)
\psline[linewidth=1.5pt]{<-}(9.5,2)(10.3,2)
\psline[linewidth=1.5pt]{->}(12.5,2)(11.7,2)
%\psline[linewidth=1.5pt]{->}(6.06,3.06)(5.495,2.495)
%\psline[linewidth=1.5pt]{->}(6.06,0.94)(5.495,1.505)
%\psline[linewidth=1.5pt]{->}(3.94,3.06)(4.505,2.495)
%\psline[linewidth=1.5pt]{->}(3.94,0.94)(4.505,1.505)
%\put(0,2){$\displaystyle u=\sum a_{i,j}^k\qquad i\left\{\vvv\right.$}
%\put(6.7,2){$\left.\vvv\right\}\ \!\!j$}
\put(12.6,1.9){$A^*$}
\put(9.1,1.9){$A$}
\put(10.8,3.6){$A^*$}
\put(10.8,0.1){$A^*$}
%\put(2.5,3.2){$A_1$}
%\put(7,3.2){$A_1$}
\end{pspicture}

\vskip0.5cm

Again, for our calculations it will be more convenient to present them as operations, written in a row (e.i. with the fixed starting point), so in these denotations we have the following operations corresponding to variables  $C_1, C_2, C_3, C_4$:

%These are the following.

$$
\begin{array}{llll}
{\rm Secondary \,\,\, type}&&&\\
{\rm variable \,\,\, C3}&&{\rm variable \,\,\, C4}&\\
A\otimes A\otimes A\otimes A^*,&A^*\times A^*\times A^*\to A^*,&A^*\otimes A^*\otimes A^*\otimes A,&A\times A\times A\to A,\\
%{\rm variable \,\,\, C3}&&{\rm variable \,\,\, C4}&\\
A^*\otimes A\otimes A\otimes A,&A\times A^*\times A^*\to A,&A\otimes A^*\otimes A^*\otimes A^*,&A^*\times A\times A\to A^*,\\
A\otimes A^*\otimes A\otimes A,&A^*\times A\times A^*\to A,&A^*\otimes A\otimes A^*\otimes A^*,&A\times A^*\times A^*\to A^*,\\
A\otimes A\otimes A^*\otimes A,&A^*\times A^*\times A\to A,&A^*\otimes A^*\otimes A\otimes A^*,&A\times A\times A^*\to A^*,\\
%{\rm variable \,\,\, C4}&&&\\
{\rm variable \,\,\, C1}&&{\rm variable \,\,\, C2}&\\
A\otimes A\otimes A\otimes A,&A^*\times A^*\times A^*\to A,&A^*\otimes A^*\otimes
A^*\otimes A^*,&A\times A\times A\to A^*.
\end{array}
$$

Let us look at what we can get from the Maurer-Cartan in arity 5.

First consider the input row containing 4 or more entries from $A$ (or $A^*$). It is easy to check that in this case all terms of equations we get contain
secondary type variables. For example, consider the input $A,A,A^*,A,A$. The term $m_3(m_3(a,f,b),c,d)$ is zero if $m_3(a,b,f)\in A$, since $A$ is associative
algebra and $m_3(a_1,a_2,a_3)=0$ for all $a_1,a_2,a_3\in A$. If $m_3(a,b,f)\in A^*$ then the operation is of secondary type, from tensor
$A^*\otimes A^*\otimes A\otimes A^*$.

Another group of equations correspond to input containing three $A$ (or $A^*$). These are divided according to what is the output of the corresponding
operation of arity 5. In case of operations with 3 inputs from $A$, two inputs from $A^*$ and output from $A^*$ as well as 3 inputs from $A^*$,
two inputs from $A$ and output from $A$, all terms of the equations still contain at least one variable of secondary type.

This property of equations will allow us to restrict any solution of the Maurer-Cartan to the ones containing only main variables (take the projection of solution to
the space of main variables, and ensure that we have a solution again).

In the cases of operations with 3 inputs from $A$, two inputs from $A^*$ and output from $A$ as well as 3 inputs from $A^*$,
two inputs from $A$ and output from $A^*$, all terms of the equations  contain only variables of the main type. Each of these cases corresponds
to 10 (5 choose 2) equations on main variables. We consider their structure in more detail. These equations on main variables contain both variables of
types $A$ and of $B$. Call variables of type $B$ by $X$'s and of type $A$ by $Y$'s. Then the system of equations again splits into those, each term
of which contains a $Y$ variable and those which are equations only on $X$'s.

\begin{lemma}
Any equation on main variables coming  from $MC_5$ either containing  only terms $XX$, i.e. only variables of type $B$ or each term contains at least one variable $Y$ - variable of type $A$.
\end{lemma}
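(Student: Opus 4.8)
The plan is to reduce the statement to a single clean combinatorial rule and then replace brute enumeration by a short forcing argument. Throughout I would work in the case of three inputs from $A$, two from $A^*$ and output in $A$; the complementary case (three from $A^*$, two from $A$, output in $A^*$) is handled by the evident $A\leftrightarrow A^*$ symmetry. Recall that $MC_5$ has exactly three terms, each a composite $m_3\circ m_3$ of an inner operation on three consecutive inputs $(x_i,x_{i+1},x_{i+2})$ and an outer operation taking the inner output $d_i$ together with the two remaining inputs. The first point I would record is that, once contributions lying in secondary variables are discarded, the intermediate color $d_i$ is forced: a length-three operation is a main variable precisely when its three inputs contain two of one color and one of the other, and then its output color equals the majority color. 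Matching this with the requirement that the outer operation also carry two inputs of color $A$ (so that its output is $A$) pins down $d_i$ uniquely in each term, while any term whose inner window is monochromatic $A$ vanishes, since $A$ is an associative subalgebra with $m_3|_{A^{\otimes 3}}=0$.

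The lemma-internal observation I would establish next is the rule separating the two main types. Passing from an operation to its associated length-four tensor dualizes the three inputs and keeps the output slot, so that type $B$ (the alternating tensor $A\otimes A^*\otimes A\otimes A^*$) versus type $A$ (the clustered tensor) becomes a statement about positions: a main $m_3(y_1,y_2,y_3)$ is of type $B$ (an $X$) exactly when its minority-colored input occupies the middle slot $y_2$, and of type $A$ (a $Y$) when the minority input sits at an end. I would check this rule against the six entries of the type $A$ / type $B$ table of Section~\ref{str}; this bookkeeping with the dualization convention is the one genuinely delicate point, and I would isolate it once so that it can afterwards be applied mechanically.

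With this rule in hand the dichotomy follows by forcing rather than by a case list. Suppose one of the three terms is $XX$, i.e.\ both its inner and outer operations are of type $B$. For the inner window $(c_i,c_{i+1},c_{i+2})$ this forces $c_i=c_{i+2}$ and $c_{i+1}=\bar c_i$ (writing $\bar{\,\cdot\,}$ for the swap $A\leftrightarrow A^*$), whence $d_i=c_i$; imposing that the outer triple, which contains $d_i$ in the slot dictated by the bracketing together with the two outer inputs, again has its minority in the middle forces the two remaining inputs to continue the alternation. Carrying this out for each of the three bracketings yields in every case the single coloring $c_1=c_3=c_5$, $c_2=c_4=\bar c_1$, namely the fully alternating input $A,A^*,A,A^*,A$. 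Conversely, a direct substitution shows that for that alternating coloring all three terms are indeed $XX$.

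Combining these observations closes the argument as a contrapositive: if the input coloring is anything other than the alternating one, then no term can be $XX$, so every nonzero term is $XY$, $YX$ or $YY$ and hence contains at least one variable $Y$; and if the coloring is the alternating one, every term is $XX$. This is precisely the claimed alternative. The main obstacle I anticipate is not the logic but the sign- and convention-tracking in the operation/tensor dictionary underlying the middle-slot rule; the benefit of isolating that rule first is that the remainder becomes a uniform one-line forcing computation valid across all ten input patterns, the only additional care being to discard the terms with a monochromatic inner window, which vanish identically.
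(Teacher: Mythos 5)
Your proposal is correct and takes essentially the same route as the paper: both arguments show that a term composed of two type-$B$ operations forces the fully alternating input row ($A,A^*,A,A^*,A$ or its dual $A^*,A,A^*,A,A^*$), and that these alternating rows produce no terms containing a type-$A$ variable, which yields the claimed separation. Your ``middle-slot rule'' is only a clean repackaging of the paper's explicit listing of the two type-$B$ operations $A\times A^*\times A\to A$ and $A^*\times A\times A^*\to A^*$, so your forcing step coincides with the paper's case check.
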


\begin{proof}
Let us see from which  inputs terms of type $XX$ can appear. There are two operations of type $B$: I. $A\times A^* \times A \to A$ and II. $ A^* \times A \times A^* \to A^*$.
Consider the case of composition of the type $m_3(x_1, m_3(x_2,x_3,x_4),x_5))$. In case I. to have a composition of two operations of type B we forced to start with input row
$A^*(AA^*A)A^*$. In case II. to have a composition of two operations of type B we forced to start with input row $A(A^*AA^*)A$. The remaining types of compositions: $m_3( m_3(x_1,x_2,x_3), x_4, x_5))$ and  $m_3(x_1, x_2, m_3(x_3,x_4,x_5))$  analogously give the same result. Thus the only rows of inputs from which $XX$ term can appear are those two rows. We see moreover that no compositions containing variable Y (operation of type A) appear from this row of input. Thus variables $X$ and $Y$ are separated in the above sense in this system of equations.
\end{proof}

%It is not a difficult check. For example, if the inputs are $A,A^*,A^*,A,A$ with the %output in $A$, then each term of the corresponding component of Maurer-Cartan
%equation, each of the terms contains one $Y$ variable. The remaining cases are %considered analogously. We get 'quadratic' equations on variables $X$ only,
%from the input $A,A^*,A,A^*,A$ and output $A$ as well as from the input %$A^*,A,A^*,A,A^*$ and output $A^*$.

This structure of the system of equations on operations which constitute an unwrapped Maurer-Cartan equation will be a key to relate any pre-Calabi-Yau structure
concentrated in appropriate arities to the double Poisson bracket. Each equation that we get from $MC_5$ consists of 'quadratic' terms, meaning terms involving
two operations. This system of equations has the feature that in no equation both terms containing two $X$ variables and $XY$ or $YY$ terms appear. These
terms are separated. The above arguments  allow us to see that

\begin{proposition}\label{121121}
Projection of any $MC_5$ solution to the $B$-type component is also a solution of  $MC_5$.
\end{proposition}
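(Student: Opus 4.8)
The plan is to treat $MC_5$ not as one identity but as a family of scalar equations indexed by the pattern of the five inputs and single output among $A$ and $A^*$, and then to feed in the separation of variables just established. Because $m_1=0$ and $m_4=0$, the only surviving contribution to the arity-five component of $[m,m]$ is $m_3\circ m_3$; hence every term of each scalar equation is a composition of two copies of $m_3$, and each factor is one of the main variables --- type $B$ (written $X$) or type $A$ (written $Y$) --- or one of the secondary variables $C_1,\dots,C_4$. Denote by $m^B$ the projection of the solution $m$ obtained by discarding the type-$A$ and all secondary components of $m_3$ while leaving the type-$B$ components unchanged; the goal is to show that $m^B$ satisfies $MC_5$.

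First I would record the trichotomy of the scalar equations that emerges from the case analysis preceding the Lemma together with the Lemma itself. Sorting by the number of $A$- versus $A^*$-inputs and by the output type, each equation of $MC_5$ falls into exactly one of three classes: $(a)$ those in which every term carries at least one secondary factor (the rows with four or more entries of one kind, and the three/two rows whose output forces a secondary operation); $(b)$ equations on main variables in which every term contains a type-$A$ factor $Y$; and $(c)$ equations on main variables all of whose terms are of the form $XX$, i.e.\ purely type $B$. The essential input here is the Lemma, whose content is exactly that $(b)$ and $(c)$ never overlap: no single equation contains both an $XX$ term and a term carrying a $Y$ or a secondary factor.

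Next I would evaluate each class on $m^B$. In class $(a)$ every term contains a secondary factor, which is zero for $m^B$, so the equation reads $0=0$; in class $(b)$ every term contains a factor $Y$, again zero for $m^B$, so once more $0=0$. In class $(c)$ the equation involves only type-$B$ components, on which $m$ and $m^B$ agree by construction, so its value at $m^B$ equals its value at $m$, which vanishes because $m$ solves $MC_5$. Since the three classes exhaust all scalar equations of $MC_5$, every one of them holds for $m^B$, which is precisely the assertion that the projection is again a solution.

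I expect the only delicate point to be the exhaustiveness and disjointness of this trichotomy, rather than any of the evaluations above. One must be certain that the bookkeeping over all six-slot input/output patterns is complete, and that the Lemma's separation genuinely forbids any equation from mixing a purely type-$B$ term with a $Y$- or secondary-bearing term; granting that, the rest is the finite verification that each admissible pattern lands in exactly one of $(a)$, $(b)$, $(c)$, which requires no new idea beyond the arguments already given.
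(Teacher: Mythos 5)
Your proof is correct and takes essentially the same route as the paper: the paper's own argument is exactly the case analysis preceding the Proposition (inputs with four or more entries of one kind, then the three/two splits sorted by output) combined with the Lemma's separation of $XX$ terms from $Y$-bearing terms, after which setting the type-$A$ and secondary components to zero kills the first two classes of equations term by term and leaves the purely type-$B$ equations unchanged. You merely spell out the final evaluation step that the paper leaves implicit in ``the above arguments allow us to see that,'' so there is no substantive difference in approach.
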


\section{Solutions to the Maurer-Cartan equations in arity four and five and double Poisson bracket}\label{MC45}

In this section we show that the pre-Calabi-Yau structures of type B, namely the ones which are solutions of type B (corresponding to the tensor $A\oo A^*\oo A\oo A^*$ or $ A^*\oo A\oo A^* \oo A$) of the Maurer-Cartan equation on $A\oplus A^*$,  are in
one-to-one correspondence with the non-graded double Poisson brackets.

 We choose the main example of grading on $A+A^*$ in order to get correspondence with the non-graded double Poisson bracket. Namely, in order to have multiplication on $A$ to be of degree 0 (as it should be in {\it Conv.0} ), we have to   have $A_0= A$. Then, in order for the type $B$ operations (the most interesting part of the solution of  the Maurer-Cartan, which is a ternary operation) to make sense, i.e. according to the {\it Conv.0}, to be of degree $-1$, we need $A^*$ to be in
the component of degree $1$. That is, $R=A\oplus A^*$ is graded by $R_0=A$ and $R_1=A^*$.

 Now we shift this grading by one, to use more convenient formulae of {\it Conv.1}. Thus we get in $A^{sh}=A[1]$ $A^{sh}_{-1}=A$, and $R^{sh}=A^{sh}+A^{*sh}$ is graded by
 $R^{sh}_{-1}=A$,  $R^{sh}_{0}=A^*$, that is $A$ will have degree $-1$, and $A^*$, degree $0$, when we are in shifted situation, and in {\it Conv.1}.

Let $A$ be an arbitrary associative algebra $A=(A,m=m_2^{(1)})$ with a pre-Calabi-Yau structure given as a cyclicly symmetric
$A_\infty$-structure on $A\oplus A^*$: $(A\oplus A^*,m=m_2^{(1)}+m_3^{(1)})$. We define the double Poisson bracket
via the pre-Calabi-Yau structure, more precisely its component corresponding to the tensor $A\otimes A^*\otimes A\otimes A^*$,
as follows.

\begin{definition}\label{br} The double bracket is  defined as:
$$
\langle g\otimes f,\d b,a\c \rangle:=\langle m_3(a,f,b), g \rangle,
$$

where $a,b\in A$, $f,g\in A^*$ and $m_3(a,f,b)=c\in A$ corresponds to the component of $m_3$:
$A\times A^*\times A\to A$.
\end{definition}

By choosing this definition we set up a one-to-one correspondence between pre-Calabi-Yau structures of type B and double Poisson brackets  from \cite{VdB}. This choice have been done in such a way that it would be possible, having the Maurer-Cartan equation to show, that the double bracket defined above indeed satisfies all axioms of double Poisson bracket. Moreover,  no other identities follows in case $m_i=0, i\geq 4$.
Note, that it is most subtle point, since there are many possibilities for this choice.
%, and only some choices give the required one-to-one correspondence.
% This will be our next target.

We will check that double bracket defined in this way satisfies all axioms of the double Poisson bracket.

Anti-symmetry:

\begin{equation}\label{sym}
\d a,b \c = - \d b,a \c^{op}
\end{equation}
Here $\d b,a \c^{op}$ means the twist in the tensor product, i.e. if $\d b,a \c =
\sum\limits_i b_i \oo c_i$, then $\d b,a \c^{op}=\sum\limits_i c_i \oo b_i.$

Double Leibniz:

\begin{equation}\label{Leib}
\d a,bc \c = b \d a,c \c + \d a,b \c c
\end{equation}

and double Jacobi identity:

\begin{equation}\label{Jac}
\d a, \d b,c \c\c_L + \tau _{(123)} \d b \d c,a \c \c_L + \tau _{(132)} \d c \d a,b \c \c_L
\end{equation}
Here for $a \in A\oo A \oo A, \,\,$ and $ \sigma \in S_3\,\,$

$$\tau _{\sigma}(a)=
a_{\sigma^{-1}} (1) \oo a_{\sigma^{-1}} (2) \oo a_{\sigma^{-1}} (3).$$

The $\d\,\c_L$ defined as
$$ \d b, a_1\oo a_n \c_L = \d b, a_1 \c_L \oo a_1\oo ... \oo a_n$$

\begin{theorem}\label{main}
Let we have $A_{\infty}$-structure on $(A\oplus A^*,  m=\sum\limits_{i=2, i\neq 4}^{\infty}m_i^{(1)})$.
Define the bracket by the formula

$$
\langle g\otimes f,\d b,a\c\rangle:=\langle m_3(a,f,b), g \rangle,
$$
where $a,b\in A$, $f,g\in A^*$ and $m_3(a,f,b)=c\in A$ corresponds to the component of type B of the solution to  the Maurer-Cartan, i.e. $m_3$: $A\times A^*\times A\to A$. Then this bracket does satisfy all axioms of the double Poisson algebra.

Moreover, pre-Calabi-Yau structures
 of type $B$ (corresponding to the tensor
$A\otimes A^*\otimes A\otimes A^*$ or $A^*\otimes A\otimes A^*\oo A$) with $m_i=0, i\geq 4$ are in one-to-one correspondence with the double Poisson brackets
$\d\cdot,\cdot\c:A\otimes A\to A\otimes A$ for an  arbitrary associative algebra $A$.

\end{theorem}

\begin{proof}
 Anti-symmetry of the double Poisson bracket reads in these notations:
$$
\langle f\otimes g,\d b,a\c\rangle=-\langle g\otimes f,\d a,b\c\rangle,
$$
%%where $(-1)^\sigma$ means that the sign is assigned according to the Koszul rule.

So we need to check that
$$
\langle m_3(b,g,a), f\rangle= - \langle m_3(a,f,b), g \rangle.
$$
Indeed, using cyclic invariance, we have

$$\langle f\otimes g,\d b,a\c\rangle=\langle m_3(a,f,b), g\rangle$$
$$=(-1)^{|b|'(|g|'+|a|'+|f|')} \langle m_3(g,a,f), b\rangle =$$
$$=(-1)^{|b|'(|g|'+|a|'+|f|')} (-1)^{|g|'(|a|'+|f|'+|b|')} \langle m_3(a,f,b), g\rangle $$
$$= - \langle m_3(a,f,b), g\rangle = -\langle g\otimes f,\d a,b\c\rangle$$

We used the fact that in our grading $\forall f\in A^*, |f|'=0$ and $\forall a\in A, |a|'=-1.$

By this the anti-symmetry of obtained in this way from pre-Calabi-Yau structure (non-graded) bracket is proven.

%$$
%\langle m_3(a,f,b), g \rangle = \langle m_3(f,b,g), a\rangle= - \langle m_3(b,g,a), f %\rangle.
%$$

%{\bf Example.} \ If we consider the entire $A$ sitting in degree 0 and suppose %(according to the second degree convention, which we normally do not use here, but for %this example to work we need it) deg$\,m_k=k-2$ and  $A^*$ will sit in
%degree $-1$, the latter inequality becomes
%$$
%\langle g,m_3(a,f,b)\rangle=(-1)^{3}\langle f,m_3(b,g,a)\rangle=-\langle %f,m_3(b,g,a)\rangle,
%$$
%which is the antisymmetry of the non-graded double Poisson bracket.

Now we deduce the Leibnitz identity from the part of the arity 4 of the Maurer--Cartan equations with inputs from $A$,$A$, $A^*$ and $A$.

General  Maurer--Cartan in arity 4 reads:

$$ m_3(x_1x_2, x_3, x_4)+ (-1)^{|x_1|'}m_3(x_1,x_2x_3, x_4)+ (-1)^{|x_1|'+|x_2|'}m_3(x_1,x_2,x_3 x_4)+
$$
 $$(-1)^{|x_1|'}x_1 m_3(x_2,x_3, x_4)+m_3(x_1,x_2,x_3) x_4=0$$

Applying this to the input $a,b,f,c$  from $A$,$A$, $A^*$, $A$ we have
$$
m_3(ab,f,c)+ (-1)^{|a|'} m_3(a,bf,c) +(-1)^{|a|'+|b|'} m_3(a,b, fc) +(-1)^{|a|'} am_3(b,f,c) + m_3(a,b,f)c=0.
$$

%Since we consider solutions of type B, which means components of type A are equal to %zero, two terms  m_3(a,b,f)c

%$$
%m_3(ab,f,c)\pm m_3(a,bf,c)\pm m_3(a,b,fc)\pm am_3(b,f,c)\pm m_3(a,b,f)c=0.
%$$

Since we consider solutions containing only $B$-type components, two terms in the equation ($m_3(a,b,f)c$ and $ m_3(a,b, fc)$) vanish, leaving us with

\begin{equation}\label{MC4gr}
m_3(ab,f,c) -  m_3(a,bf,c)  -  am_3(b,f,c) =0.
\end{equation}
after we applied our grading, where $|a|'=-1$ for all $a\in A.$

Now we pair the above equality obtained from $MC_4$ with $g$ (the equality holds if and only if it holds for any pairing with an arbitrary  $g\in A^*$):

$$
\langle m_3(ab,f,c), g \rangle - \langle m_3(a,bf,c), g \rangle - \langle am_3(b,f,c), g \rangle =0.
$$
%$$
%m_3(ab,f,c)\pm m_3(a,bf,c)\pm am_3(b,f,c)=0.
%$$
and express the three terms appearing there via the double bracket.

For doing this we need the following lemma.

\begin{lemma}\label{LR}
The following equalities hold:

$${\rm (R)} \quad \quad \langle  g  \otimes a f, \d b, c\c \rangle= \langle  g  \otimes  f, \d b, c\c a \rangle
$$

$${\rm (L)} \quad \quad \langle  g a \otimes  f, \d b, c\c \rangle= -\langle  g  \otimes  f, a \d b, c\c  \rangle
$$

\end{lemma}

\begin{proof}

We use here Sweedler notations: $\d b, c\c = \sum b_i \otimes c_i  = \sum b' \otimes c''$.

(R)
 $$\langle  g  \otimes a f, \d b, c\c \rangle=
 \sum\langle  g  \otimes  af,  b'  \otimes  c''\rangle=
$$
$$
 \sum\langle  g, b'\rangle \langle  af, c''\rangle \mathop{=}^{(\ref{cycR})}
 \sum\langle  g,  b'\rangle \langle  f,  c'' a\rangle=
 $$
$$
 \langle  g  \otimes  f, \sum  b'  \otimes  c'' a\rangle=
 \langle  g  \otimes f, \d b, c\c a\rangle
 $$

 We use here:
 $$\langle a f, c'' \rangle \mathop{=}^{(\ref{cyc1})} (-1)^{|a|'(|f|'+|c''|')} \langle  f c'', a \rangle \mathop{=}^{(\ref{cyc1})}
 $$
 $$
 (-1)^{|a|'(|f|'+|c''|')} (-1)^{|f|'(|c''|'+|a|')} \langle  c'' a, f \rangle
 $$

$$
 \mathop{=}^{(\ref{sym2})}(-1)^{|a|'(|f|'+|c''|')} (-1)^{|f|'(|c''|'+|a|')}\cdot - (-1)^{|c'' a|'|f|'} \langle  f,c'' a \rangle
 $$

and in our grading, where for all $a\in A, f\in A^*$, $|a|'=-1, |f|'=0$, we get

\begin{equation}\label{cycR}
\langle a f, c'' \rangle = \langle f, c'' a \rangle
\end{equation}
for all $a,c'' \in A, \,\, f\in A^*$

(L)
 $$\langle  g a \otimes  f, \d b, c\c \rangle=
 \sum\langle  g a \otimes  f\rangle \langle  b'  \otimes  c''\rangle=
$$
$$
 \sum\langle  g a \otimes  b'\rangle \langle  f  \otimes  c''\rangle \mathop{=}^{(\ref{cycL})}
 \sum\langle  a b', g \rangle \langle  f,  c'' \rangle
 $$
$$
\mathop{=}^{(\ref{symL})} - \sum \langle  g,  a b'\rangle \langle f, c'' \rangle=
 - \sum \langle g\otimes  f, ab' \oo c'' \rangle=
 -\langle  g  \otimes f, a \d b, c\c \rangle
 $$

 We use here:

 $$\langle ga, b' \rangle \mathop{=}^{(\ref{cyc1})} (-1)^{|g|'(|a|'+|b'|')}
 \langle  a b', g \rangle
 $$

$$
 \langle  a, b' g \rangle \mathop{=}^{(\ref{sym2})}(-1)^{|ab'|'|g|'} \langle  g, a b'  \rangle
 $$

and in our grading, where for all $a\in A, f\in A^*$, $|a|'=-1, |f|'=0$, we get
\begin{equation}\label{cycL}
\langle g a, b' \rangle = \langle a b', g \rangle
\end{equation}

and

\begin{equation}\label{symL}
\langle ab', g \rangle = - \langle g, a b' \rangle
\end{equation}

respectively, for all $a,b' \in A, \,\, g\in A^*$

\end{proof}

%\begin{align*}
%\langle g,m_3(ab,f,c)\rangle&=\langle g\otimes f,\{ab,c\}\rangle;
%\\
%\langle g,m_3(a,bf,c)\rangle&=\pm \langle ga,m_3(a,bf,c)\rangle=\pm\langle g\otimes %f,a\{b,c\}\rangle;
%\\
%\langle g,am_3(b,f,c)\rangle&=\pm \langle bf,m_3(c,g,a)\rangle=\pm \langle bf\otimes %g,\{c,a\}\rangle
%\\
%&=\pm \langle g\otimes bf,\{a,c\}\rangle=\pm \langle g\otimes f,\{a,c\}b\rangle.
%\end{align*}
%Taking the  Koszul rule into account this gives
%$$
%\{a,bc\}=(-1)^\sigma \{a,c\}b+a\{b,c\}.
%$$

Now we are ready to express three terms of the Maurer-Cartan equation \ref{MC4gr} via the bracket.

\begin{align*}
\langle m_3(ab,f,c), g \rangle&\mathop{=}^{def} \langle  g\otimes f, \d c, ab\c \rangle;
\\
\langle m_3(a,bf,c), g\rangle&\mathop{=}^{def}  \langle g \otimes bf ,\d c,a\c \rangle\mathop{=}^{R}
\langle g\otimes f, \d c,a \c b \rangle;
\\
\langle a m_3(b,f,c), g\rangle&\mathop{=}^{cycl.m_2} - \langle m_3(b,f,c),ga \rangle\mathop{=}^{def} - \langle ga\oo f, \d c,b \c  \rangle \mathop{=}^{L}
 \langle g \oo f, a \d c,b \c \rangle
\\
%&=\pm \langle g\otimes bf,\{a,c\}\rangle=\pm \langle g\otimes f,\{a,c\}b\rangle.
\end{align*}

%Taking the  Koszul rule into account t

According to these  the Maurer-Cartan can be rewritten as

$$\langle  g\otimes f, \d c, ab\c \rangle - \langle g\otimes f, \d c,a \c b \rangle -
\langle g \oo f, a \d c,b \c \rangle$$

%$$
%\{a,bc\}=(-1)^\sigma \{a,c\}b+a\{b,c\}.
%$$

which is exactly the Leibniz identity:

$$
\d c, ab\c = \d c,a \c b + a \d c,b \c
$$

%$$
%\{a,bc\}= \{a,b\}c+b\{a,c\}.
%$$
Now it remains to prove that the double bracket defined via the solution of the Maurer-Cartan (of type B) as

 $$\langle g\otimes f,\d b,a\c\rangle= \langle m(a,f,b),g \rangle
$$

 for all $a,b \in A, f,g \in A^*$, does satisfy the Jacobi identity.

%The double Jacobi identity is deduced from the part of arity 5
The appropriate part of the Maurer-Cartan equation to consider is the part of arity 5, with inputs from
$A$,$A^*$, $A$, $A^*$ and $A$.

General  Maurer--Cartan in arity 5 reads:

$$
(-1)^0 m_3(m_3(x_1, x_2,x_3), x_4, x_5) + (-1)^{|x_1|'} m_3(x_1, m_3( x_2,x_3, x_4), x_5)
+ $$
$$(-1)^{|x_1|'+|x_2|'} m_3(x_1, x_2, m_3( x_3, x_4, x_5) )=0
$$

Applying this to the input $a,f,b,g,c$ from $A$,$A^*$, $A$, $A^*$, $A$ we get

$$m_3(m_3(a,f,b),g,c)+ (-1)^{|a|'} m_3(a,m_3(f,b,g),c) + (-1)^{|a|'} (-1)^{|a|'+|f|'}m_3(a,f,m_3(b,g,c))=0.$$

Thus from the Maurer-Cartan we have.

\begin{equation}\label{maca5}
m_3(m_3(a,f,b),g,c)- m_3(a,m_3(f,b,g),c)- m_3(a,f,m_3(b,g,c))=0.
\end{equation}

Since we are going to prove the double Jacobi identity:

$$
\d a,\d b,c\c\c_L+\tau_{123}\d b,\d c,a\c\c_L+\tau_{132}\d c,\d a,b\c\c_L=0.
$$

we need to express double commutators via the operations - solutions of the Maurer-Cartan equation.

\begin{lemma}\label{trb} For any $a,b,c \in A$ and $\alpha,\beta,\gamma \in A^*$ the from the definition \ref{br} it follows:

$$
\langle \alpha\otimes\beta\otimes\gamma, \d a,\d b,c\c\c_L\rangle
=\langle m_3(m_3(c,\gamma,b),\beta,a),\alpha \rangle
$$

\end{lemma}

\begin{proof}

\begin{align*}
\langle \alpha\otimes\beta\otimes\gamma, \d a,\d b,c\c\c_L\rangle&
=\langle \alpha\otimes\beta, \d a,\langle{\rm id}\otimes\gamma, \d b,c\c\rangle \c
\rangle
\\
&=\langle m_3(\langle {\rm id}\otimes\gamma, \d b,c\c\rangle,\beta, a), \alpha \rangle
\\
&=\langle m_3(m_3(c,\gamma,b),\beta,a),\alpha \rangle
\end{align*}

\end{proof}

%Thus
%\begin{equation}\label{trans}
%\langle \{a,\{b,c\}\}_L,f\otimes g\otimes h\rangle=\langle %m_3(a,g,m_3(b,h,c)),f\rangle.
%\end{equation}

Clearly (\ref{maca5}) is equivalent to
\begin{equation}\label{maca50}
\langle m_3(m_3(a,f,b),g,c),h\rangle - \langle m_3(a,m_3(f,b,g),c),h\rangle- \langle m_3(a,f,m_3(b,g,c)),h\rangle=0.
\end{equation}
for any $a,b,c\in A$ and $f,g,h\in A^*$.

By Lemma~(\ref{trb}), the first summand in (\ref{maca50}) is given by
\begin{equation}\label{t1}
\langle m_3(m_3(a,f,b),g,c),h\rangle=\langle h \oo g \oo f, \d c,\d b,a\c\c_L\rangle.
\end{equation}

We show now that the second term  in (\ref{maca50}) is expressed via double commutator as:

\begin{equation}\label{t2}
\langle m_3(a, m_3(f,b,g),c),h\rangle=-\langle  g \oo f \oo h, \d b,\d a,c\c\c_L\rangle.
\end{equation}

Indeed, using
%(\ref{trans}) and
cyclic invariance and graded symmetry of the inner product, we see

$$\langle m_3(a, m_3(f,b,g),c),h\rangle=
(-1)^{|a|'(|m(f,b,g)|'+|c|'+|h|')} \langle m_3(m_3(f,b,g),c),h),a\rangle=
$$

$$
(-1)^{|a|'(|m(f,b,g)|'+|c|'+|h|')}  (-1)^{|m(f,b,g)|'(|c|'+|h|'+|a|')} \langle m_3(c,h,a), m_3(f,b,g))\rangle=
$$

$$
(-1)^{|a|'(|m(f,b,g)|'+|c|'+|h|')}  (-1)^{|m(f,b,g)|'(|c|'+|h|'+|a|')} \cdot - (-1)^{|m(c,h,a)|'|m(f,b,g)|'}
\langle m_3(f,b,g), m_3(c,h,a) \rangle=
$$

$$
(-1)^{|a|'(|m(f,b,g)|'+|c|'+|h|')}  (-1)^{|m(f,b,g)|'(|c|'+|h|'+|a|')} \cdot - (-1)^{|m(c,h,a)|'|m(f,b,g)|'}
$$
$$(-1)^{|f|'(|b|'+|g|'+|m(c,h,a)|')}
\langle m_3(b,g, m_3(c,h,a)),f )\rangle=
$$

$$
(-1)^{|a|'(|m(f,b,g)|'+|c|'+|h|')}  (-1)^{|m(f,b,g)|'(|c|'+|h|'+|a|')} \cdot - (-1)^{|m(c,h,a)|'|m(f,b,g)|'}
$$
$$(-1)^{|f|'(|b|'+|g|'+|m(c,h,a)|')}
(-1)^{|b|'(|g|'+|m(c,h,a)|'+|f|')}
\langle m_3(g, m_3(c,h,a),f),b)\rangle=
$$

$$
(-1)^{|a|'(|m(f,b,g)|'+|c|'+|h|')}  (-1)^{|m(f,b,g)|'(|c|'+|h|'+|a|')} \cdot - (-1)^{|m(c,h,a)|'|m(f,b,g)|'}
$$
$$(-1)^{|f|'(|b|'+|g|'+|m(c,h,a)|')}
(-1)^{|b|'(|g|'+|m(c,h,a)|'+|f|')} (-1)^{|g|'(|m(c,h,a)|'+|f|'+|b|')}
\langle m_3(m_3(c,h,a)),f,b),g \rangle.
$$

Taking into account that in our grading $|m(f,b,g)|'=|f|'+|b|'+|g|'+1=0$ and
$|m(a,f,b)|'=|a|'+|f|'+|b|'+1=-1$ for all $a,b\in A, f,g\in A^*$ we see that the latter sign is '-', hence we get the required:

$$\langle m_3(a, m_3(f,b,g),c),h\rangle=-\langle  g \oo f \oo h, \d b,\d a,c\c\c_L\rangle,$$
since due to Lemma~\ref{trb}
$$\langle m_3(m_3(c,h,a)),f,b),g \rangle = \langle  g \oo f \oo h, \d b,\d a,c\c\c_L\rangle.$$

Now we consider the third term in (\ref{maca50}) and show that it is expressed via double commutator as:

\begin{equation}\label{t3}
\langle m_3(a,f, m_3(b,g,c))),h\rangle=-
\langle   f \oo h \oo g, \d a,\d c,b\c\c_L\rangle.
\end{equation}

Indeed, using cyclic invariance, we see

$$\langle m_3(a,f, m_3(b,g,c)),h\rangle=
(-1)^{|a|'(|f|'+|m(b,g,c)|'+|h|')} \langle m_3(f, m_3(b,g,c),h),a\rangle=
$$
$$
(-1)^{|a|'(|f|'+|m(b,g,c)|'+|h|')} (-1)^{|f|'(|m(b,g,c)|'+|h|'+|a|')}\langle m_3( m_3(b,g,c),h,a), f\rangle
$$

Taking into account signs in our grading, we see that the letter sign is "-", thus by \ref{trb}

$$
(-1)^{|a|'(|f|'+|m(b,g,c)|'+|h|')} (-1)^{|f|'(|m(b,g,c)|'+|h|'+|a|')}
\langle m_3( m_3(b,g,c),h,a), f\rangle
$$
$$ - \langle m_3( m_3(b,g,c),h,a), f\rangle=
- \langle   f \oo h \oo g, \d a,\d c,b\c\c_L\rangle.
$$

Thus \ref{maca50} can be rewritten as:

$$
\langle   h \oo g \oo f, \d c,\d b,a\c\c_L\rangle +
\langle g\oo  f \oo h , \d b,\d a,c\c\c_L\rangle +
\langle   f \oo h \oo g, \d a,\d c,b\c\c_L\rangle=0
$$

We see that permutations of functionals $f,g,h \in A^*$ in our formulas match with the permutation on the images of the bracket in the double  Jacobi identity:

$$
\tau_{(123)}(h \oo g \oo f)=g\oo f\oo h, \tau_{(132)}(h \oo g \oo f)= f\oo h\oo g.
$$

 Thus we get the required identity~\ref{Jac}:

 $$
 \d c \d b,a \c\c_L + \tau_{(123)} \d b \d a,c \c\c_L + \tau_{(132)}  \d a \d c,b \c\c_L=0.
 $$

By this the proof of the first part of the theorem is completed.

\vspace{5mm}

Now we prove the {\it second part of the theorem}, i.e. that  the double Poisson algebras are in {\it bijection} with the pre-Calabi-Yau algebras of type B, given by $A_{\infty}$-structures on $A \oplus A^*=(A \oplus A^*,  m=m_2^{(1)}+m_3^{(1)})$. We need to check that all remaining components of the Maurer-Cartan equations does not give any other identities, but the double bracket axioms.

Thus we consider identities which appears from $MC_4$ and $MC_5$ for all possible types of input. We will show that from $MC_4$ on type B operations we get only Leibniz identity, written in one of two forms:

$$\d a,bc \c=b \cdot \d a,c\c+ \d a,b \c \cdot c
$$

$$ \d bc, a\c = b \star \d c,a \c + \d b,a \c \star c
$$
where  $\cdot$ denotes the outer multiplication on $A-A$ bimodule $A\otimes A$:
$c\cdot (a\otimes b)=c a \otimes b, \,\,  (a\otimes b) \cdot c = a \otimes b  c$,
and $\star$ denotes the inner multiplication on $A-A$ bimodule $A\otimes A$:
$c \star (a\otimes b)= a \otimes c b, \,\,  (a\otimes b) \star c = a  c \otimes b.$
Whenever we have anti-symmetry identity: $\d a,b\c=-\d b,a  \c^{op}$, these two forms of Leibniz identity are equivalent.
%this was shown for example in [KacPoiss]

Then we show that from all $MC_5$ equations on  type B variables, the only nontrivial identities we get are two copies of the double Jacobi identity.

Note, that if $m_4=0$, but all higher operations $m_k, \,\, k\geq 5$ are arbitrary, we get Leibnitz from $MC_4$ and Jacobi identity from $MC_5$, however we can get extra identities from $MC_6$, those which connect $m_2$ and $m_5$, from $MC_7$, those which connect $m_2$ and $m_6$,  $m_3$ and $m_5$, etc.
Thus we emphasize that the second part of the theorem holds only for the pre-Calabi-Yau-structures of type B, given by the $A_\infty$-structures  $(A\oplus A^*, m=m_2^{(1)}+m_3^{(1)}$.

Let us start with $MC_4$. We need to consider all possible inputs for the arity 4 Maurer-Cartan, each will give a separate equation. A priori there are $2^4$ of such inputs, however, we will see, that when we consider ternary operations $m_3$ on $A \otimes A^*$ with only nonzero components corresponding to  tensors of type B: $A^*\otimes A \otimes A^* \otimes A$ or $A\otimes A^* \otimes A \otimes A^* $, some terms of these equations vanish. We also take in account grading, chosen  on $R=A \oplus A^*.$ For example, having this grading means that for any element $f,g \in A^*$, $fg=0$, since $|fg|'=|f|'+|g|'+1$ because $\deg m_2=1$ in our $conv.1$, but $|f|'+|g|'+1=1$, hence $|fg|'=1$, which for element $fg \in A^*$ is possible only if $fg=0.$

For inputs from $A\times A \times A \times A$ $MC_4$ is trivial. Let  $a,b,c,d \in A$, then from $MC_4$ we have an equation:

$$ m_3(ab,c,d) \pm m_3(a, bc,d) \pm m_3(a,b,cd) \pm a m_3(b,c,d) + m_3(a,b,c)d=0.$$

All terms in this equation vanish since neither of them correspond to the B-component of the structure, that is one of the operations
$A^* \times A \times A^* \to A^*$ or $A\times A^* \times A  \to A$, so they are equal to zero for all the type B variables.
%Moreover, in this case the equation is trivial also because our $A_\infty$ structure on %$A \oplus A^*$ is considered for an associative algebra $A$ (with zero derivation), %thus any ternary operation on $A$  is zero.

The vanishing of the equation will happen for the inputs:
$$A^* \times A \times A \times A, \,  A \times A \times A \times A^*; \,
A^* \times A^* \times A^* \times A, \, A\times  A^* \times A \times A^*, \,
A^* \times A \times A^* \times A^*, \, A\times  A^* \times A^* \times A^*;$$
$$A \times A^*  \times A^* \times A, \, A\times  A \times A^* \times A^*, \, A^* \times A^*  \times A \times A; \,
A^* \times A^* \times A^* \times A^*, \, A\times  A \times A \times A.$$

Let us give an argument just for one of those as an example, less trivial than the one above, where all inputs were from $A$. Let us consider the input $f,a,g,h$ from $A^* \times A \times A^* \times A^*$. The $MC_4$ will look like
$$m_3(fa,g,h) \pm m_3(f,ag,h) \pm m_3(f,a,gh) \pm f m_3(a,g,h) \pm m_3(f,a,g)h = 0$$
The terms $m_3(fa,g,h), m_3(f,ag,h)$ and $ f m_3(a,g,h)$ are vanishing, since they correspond to operations of secondary type, which are absent from the B-type solution we are looking for. The term  $m_3(f,a,gh)$ contains among the arguments element
$gh \in A^*\cdot A^*$, which is according to our grading and $conv. 1$ should have degree 1, and since the only nonzero graded components are of degree $-1$ and $0$, elements from $ A^*\cdot A^*$ are equal to zero. The last term $ m_3(f,a,g)h $ itself is an element from $ A^*\cdot A^*$, because whenever $m_3(f,a,g) $ is an operation of B-type, it takes value in $A^*$. Hence this last term is vanishing as well.

%The remaining equations looks as follows and are vanishing for the B-type solution.
%For $A^*\times A \times A \times A $
%...

Vanishing of the equations for the other inputs from the above list can be shown analogously.

The following inputs will give  nontrivial equations:
$$A \times A \times A^* \times A, A \times A^* \times A  \times A;
A^* \times A \times A \times A^*, A \times A^* \times A \times A^*, A^* \times A \times A^* \times A.$$

To write down the corresponding equations in terms of the double bracket defined as above, we need in addition to the lemma~\ref{LR} on the outer multiplication of the $A-A$ bimodule $A\otimes A$ (and respectively the inner multiplication on the dual bimodule
$(A\otimes A)^*$)), the following lemma on the inner multiplication $\star$ on the $A-A$ bimodule $A\otimes A$ (and respectively the inner multiplication on the dual bimodule
$(A\otimes A)^*$)).

\begin{lemma}\label{LR*}
The following equalities hold:

$${\rm (R^*)} \quad \quad \langle  g  \otimes  f a, \d b, c\c \rangle= -\langle  g  \otimes  f, a \star \d b, c\c  \rangle
$$

$${\rm (L^*)} \quad \quad \langle  a g  \otimes  f, \d b, c\c \rangle= \langle  g  \otimes  f,  \d b, c\c \star a \rangle
$$

\end{lemma}

\begin{proof}

We use here Sweedler notations: $\d b, c\c = \sum b_i \otimes c_i  = \sum b' \otimes c''$.

(R*)
 $$\langle  g  \otimes f a, \d b, c\c \rangle=
 \sum\langle  g  \otimes  fa,  b'  \otimes  c''\rangle=
$$
$$
 \sum\langle  g, b'\rangle \langle  fa, c''\rangle \mathop
 %{=}^{(\ref{cycR})}
=- \sum\langle  g,  b'\rangle \langle  f, a c'' \rangle=
 $$
$$
- \langle  g  \otimes  f, \sum  b'  \otimes a c'' \rangle=
- \langle  g  \otimes f, a \star \d b, c\c \rangle
 $$

 We used here:
 $$\langle  f a, c'' \rangle \mathop{=}^{(\ref{cycL})} \langle  ac'', f \rangle \mathop{=}^{(\ref{symL})} - \langle  f, ac''\rangle $$

(L*)
 $$\langle a g  \otimes  f, \d b, c\c \rangle=
 \sum\langle  a g  \otimes  f\rangle \langle  b'  \otimes  c''\rangle=
$$
$$
 \sum\langle a g  \otimes  b'\rangle \langle  f  \otimes  c''\rangle \mathop{=}^{(\ref{cycR})}
 \sum\langle  g, b'a \rangle \langle  f,  c'' \rangle
 $$
$$
 = \sum \langle g\otimes  f, b'a \oo c'' \rangle=
 \langle  g  \otimes f,  \d b, c\c \star a \rangle
 $$

\end{proof}

Now we need to check that indeed from the remaining four inputs we get exactly one of the two mentioned above forms of the Leibniz identity.

We start with the input $a,f,b,c$ from $A \times A^* \times A \times A.$

$$m_3(af,b,c)+ (-1)^{|a|'} m_3(a,fb,c)+ (-1)^{|a|'+|f|'} m_3(a,f,bc) + a m_3(f,b,c) +
m_3(a,f,b)c = 0$$

Taking into account vanishing terms, and our grading, it reads, after pairing with arbitrary element $g\in A^*$.

$$- \l m_3(a,fb,c), g \r - \l m_3(a,f,bc), g \r + \l  m_3(a,f,b)c, g \r  = 0$$

Using the definition of the bracket, this means

\begin{equation}\label{e1}
\langle  g  \otimes fb,  \d c, a\c  \rangle - \langle  g  \otimes f,  \d bc, a\c  \rangle + \langle c g  \otimes f,  \d b, a\c  \rangle = 0
\end{equation}

For the third term we also used cyclic invariance with respect to $m_2$ and anti-symmetry:

$$\l  m_3(a,f,b)c, g \r  = (-1 )^{|m_3(a,f,b)|(|c|'+|g|')}
\l cg, m_3(a,f,b)\r = $$
$$ (-1 )^{|m_3(a,f,b)|'(|c|'+|g|')} \cdot - (-1 )^{|m_3(a,f,b)|'|cg|'}
\l  m_3(a,f,b), cg \r = \l  m_3(a,f,b), cg \r = \l cg \oo f, \d b,a \c \r.$$

Now each of the first and the last terms can be rewritten using lemma~\ref{LR*} as follows, to match the Leibniz rule (in the second form, written via inner multiplication).
% \ref{L2}:

$$\l g \oo fb, \d c,a \c \r \mathop{=}^{R^*} - \l g \oo f, b \star \d c,a \c \r $$

and

$$\l cg \oo f, \d b,a \c \r \mathop{=}^{L^*} - \l g \oo f,  \d b,a \c \star c \r. $$

Thus \ref{e1} reads

$$\l g \oo f, b \star \d c,a \c \r - \l g \oo f,  \d bc,a \c \r + \l g \oo f,  \d b,a \c \star c \r=0,$$
which means

$$ \d bc,a \c = b \star \d c,a \c + \d b,a \c  \star c.$$
Thus we got Leibniz identity, written via the inner product.

\vspace{5mm}

Consider now input $f,a,b,g$ from $A^*\times A\times A\times A^*$.

$$m_3(fa,b,g)+ (-1)^{|f|'} m_3(f,ab,g)+ (-1)^{|f|'+|a|'} m_3(f,a,bg) + (-1)^{|f|'} f m_3(a,b,g) +
m_3(f,a,b)g = 0$$

Which in type B and for our grading becomes:

$$ m_3(fa,b,g) +  m_3(f,ab,g) -  m_3(f,a,bg)   = 0$$

%Taking into account vanishing terms, and our grading, it reads,
These three terms are elements from $A^*$, so after pairing with an arbitrary element $c\in A$ we get an equivalent equality:

$$\l m_3(fa,b,g), c \r + \l m_3(f,ab,g), c\r - \l  m_3(f,a,bg), c \r  = 0$$

The first term can be rewritten as
%we also used cyclic invariance with respect to $m_2$ and anti-symmetry:

$$\l  m_3(fa,b,g),c \r  = (-1 )^{|fa|'(|b|'+|g|'+|c|')}
\l m_3(b,g,c), fa\r $$

$$ \mathop{=}^{(\ref{br})} \l fa \oo g, \d c,b \c \r.$$

The second:

$$\l  m_3(f,ab,g),c \r  = (-1 )^{|f|'(|ab|'+|g|'+|c|')}
\l m_3(b,g,c), fa\r $$

$$ \mathop{=}^{(\ref{br})} \l f \oo g,  \d c, ab \c \r.$$

The third:

$$-\l  m_3(f,a,bg),c \r  = - (-1 )^{|f|'}
\l m_3(a,bg,c), f\r $$

$$ \mathop{=}^{(\ref{br})} - \l f \oo bg,  \d c, a \c \r.$$

Thus we got

$$  \l fa \oo g,  \d c, b \c \r   +  \l f \oo g,  \d c, ab \c \r
      -\l f \oo bg,  \d c, a \c \r = 0.$$

Applying Lemma~\ref{LR} to the first and last term we get

$$  -\l f \oo g,  a \d c, b \c \r   +  \l f \oo g,  \d c, ab \c \r
      -\l f \oo g,  \d c, a \c b \r = 0.$$

This ensures the Leibniz identity (written via outer product).

\vspace{5mm}

Consider now input $a,f,b,g$ from $A\times A^*\times A\times A^*$.

$$m_3(af,b,g)+ (-1)^{|a|'} m_3(a,fb,g)+ (-1)^{|a|'+|f|'} m_3(a,f,bg) + (-1)^{|a|'} a m_3(f,b,g) + m_3(a,f,b)g = 0$$

Two terms $ m_3(a,fb,g)$ and $m_3(a,f,bg) $ which are not of type B vanishes and after pairing with $c \in A$ we have:

$$\l m_3(af,b,g), c \r - \l a m_3(f,b,g), c\r + \l  m_3(a,f,b)g, c \r  = 0$$

Using cyclic invariance:

$$\l m_3(af,b,g), c \r = (-1)^{|af|'(|b|'+|g|'+|c|')} \l m_3(b,g,c), af \r = (-1)^{|af|'(|b|'+|g|'+|c|')} (-1)^{|b|'(|g|'+|c|'+|af|')} \l m_3(g,c,af), b \r$$

$$(-1)^{|af|'(|b|'+|g|'+|c|')} (-1)^{|b|'(|g|'+|c|'+|af|')}
(-1)^{|g|'(|c|'+|af|'+|b|')} \l m_3(c, af, b), g \r$$

$$=- \l m_3(c, af, b), g \r = - \l g \oo af, \d b,c \c \r
\mathop{=}^{R} \l g \oo f, \d b,c \c a \r $$

The second  term:

$$-\l a m_3(f,b,g), c \r \mathop{=}^{cycl.m_2} - (-1){|a|'} \l  m_3(f,b,g) c, a \r  $$
$$\mathop{=}^{cycl.m_2} - (-1)^{|a|'}(-1)^{|m_3(f,b,g)|'}\l ca,  m_3(f,b,g) \r =$$

$$  - (-1)^{|a|'}(-1)^{|m_3(f,b,g)|'} \cdot  - (-1)^{|ca|'|m_3(f,b,g)|'}    \l m_3(f,b,g), ca \r = -  \l m_3(f,b,g), ca \r
$$

$$ = -  (-1)^{|f|'(|b|'+|g|'+|ca|')} \l m_3(b,g, ca), f \r =  -  (-1)^{|f|'(|b|'+|g|'+|ca|')}
   (-1)^{|b|'(|g|'+|ca|'+|f|')} \l m_3(g, ca, f), b \r$$

   $$=-  (-1)^{|f|'(|b|'+|g|'+|ca|')}
   (-1)^{|b|'(|g|'+|ca|'+|f|')}  (-1)^{|g|'(|ca|'+|f|'+|b|')} \l m_3( ca, f, b), g \r=$$

$$\l m_3( ca, f, b), g \r= \l g \oo f, \d b,ca \c \r.$$

The last term

$$\l m_3( a, f, b) g, c \r \mathop{=}^{cycl.m_2}  (-1)^{|m_3( a, f, b)|'(|g|'+|c|')}    \l gc, m_3( a, f, b) \r$$

$$=- (-1)^{|m_3( a, f, b)|'|gc|'} \l m_3( a, f, b), gc \r =  \l m_3( a, f, b), gc \r, $$

which by definition of the bracket is:

$$
\l m_3( a, f, b), gc \r = \l gc \oo f, \d b,a \c    \r \mathop{=}^{(L)} - \l g \oo f, c\d b,a \c    \r   $$

Combining the three terms we get the Leibniz identity (for outer product).

\vspace{5mm}

And finally, to ensure that from $MC_4$ we got nothing but Leibnitz identity, we have to consider the last non-trivial input:
$f,a,g, b$ from $A^*\times A\times A^*\times A$.

$$m_3(fa,g,b)+ (-1)^{|f|'} m_3(f, ag, b)+ (-1)^{|f|'+|a|'} m_3(f,a, gb) +  m_3(f,a,g)b +
(-1)^{|f|}f m_3(a,g,b) = 0$$

Which in type B and for our grading becomes:

$$ m_3(f,a,g)b + f m_3(a,g, b) -  m_3(f,a,gb)   = 0$$

%Taking into account vanishing terms, and our grading, it reads,
These three terms are elements from $A^*$, so after pairing with an arbitrary element $c\in A$ we get an equivalent equality:

$$\l m_3(f,a,g)b, c \r + \l fm_3(a,g,b), c\r - \l  m_3(f,a,gb), c \r  = 0$$

The first term can be rewritten as
%we also used cyclic invariance with respect to $m_2$ and anti-symmetry:

$$\l m_3(f,a,g)b, c \r  \mathop{=}^{(cycl.m_2)}(-1 )^{|m_3(f,a,g)|'(|b|'+|c|')}
\l bc, m_3(f,a,g)\r =\l bc, m_3(f,a,g)\r=$$

$$- (-1 )^{|m_3(f,a,g)|'|bc|'} \l m_3(f,a,g), bc \r =
 - (-1 )^{|m_3(f,a,g)|'|bc|'} (-1 )^{|f|'(|a|'+|g|'+|bc|'} \l m_3(a,g, bc), f \r   $$

$$= - (-1 )^{|m_3(f,a,g)||bc|'} (-1 )^{|f|'(|a|'+|g|'+|bc|'}   (-1 )^{|a|'(|g|'+|bc|'+|f|'}
\l m_3(g, bc,f), a \r
$$

 $$= - (-1 )^{|m_3(f,a,g)|'|bc|'} (-1 )^{|f|'(|a|'+|g|'+|bc|'}   (-1 )^{|a|'(|g|'+|bc|'+|f|'}
 (-1 )^{|g|'(|bc|'+|f|'+|a|'} \l m_3( bc,f,a), g \r $$

 $$=\l m_3( bc,f,a), g \r = \l g\oo f, \d a,bc \c \r
$$

The second:

$$\l f m_3(a,g,b),c \r  \mathop{=}^{cycl.m_2} (-1 )^{|f |'|(m_(a,g,b)|'+|c|')}
\l m_3(a,g,b)c, f\r $$

$$=(-1 )^{|f|'(| m_3(a,g,b)|'+|c|')} (-1 )^{| m_3(a,g,b)|'(|c|'+|f|')}
\l cf, m_3(a,g,b)\r  =$$

 $$(-1 )^{|f|'(| m_3(a,g,b)|'+|c|')} (-1 )^{| m_3(a,g,b)|'(|c|'+|f|')} \cdot
-(-1 )^{|cf|'| m_3(a,g,b)|'} \l  m_3(a,g,b), cf\r  =$$

$$\l  m_3(a,g,b), c f \r = (-1 )^{|a|'(|g|'+|b|'+|cf|'}  \l  m_3(g,b,cf), a \r = $$

$$(-1 )^{|a|'(|g|'+|b|'+|cf|'}  (-1 )^{|g|'(|b|'+|cf|'+|a|'}\l  m_3(b,cf,a), g \r =
-\l  m_3(b,cf,a), g \r$$

$$\mathop{=}^{(\ref{br})}- \l g \oo cf, \d a, b \c \r \mathop{=}^{R} - \l g \oo f, \d a, b \c c \r
$$

The third:

$$-\l  m_3(f,a,gb),c \r  = - (-1 )^{|f|'(|a|'+|gb|'+|c|')} \l m_3(a,gb,c), f\r =$$

$$- (-1 )^{|f|'(|a|'+|gb|'+|c|')} (-1 )^{|a|'(|gb|'+|c|'+|f|')} \l m_3(gb,c,f), a\r =$$

$$- (-1 )^{|f|'(|a|'+|gb|'+|c|')} (-1 )^{|a|'(|gb|'+|c|'+|f|')} (-1 )^{|gb|'(|c|'+|f|'+|a|')}
 \l m_3(c,f,a), gb\r =$$

$$\l m_3(c,f,a), gb\r \mathop{=}^{(\ref{br})} \l  gb\oo f, \d a,c \c \r
\mathop{=}^{L}-   \l  g\oo f, b \d a,c \c \r $$

Thus we got

$$  \l g \oo f,  \d a, bc \c \r  =  \l g \oo f,  \d a, b \c c \r
      +\l g \oo f, b \d a, c \c \r = 0.$$

This ensures the Leibniz identity (written via outer product).

By this we complete the proof of the second part of the Theorem~\ref{main}

\end{proof}

%The sum of these terms (due to (\ref{maca50}) yields
%$$
%\{a,\{b,c\}\}_L+\tau_{123}\{b,\{c,a\}\}_L+\tau_{132}\{c,\{a,b\}\}_L=0.
%$$

\section{Polyderivations from pre-Calabi-Yau structures induce a Poisson bracket on representation spaces}\label{rep}

Consider polyderivations, that is maps $A_1\otimes\cdots\otimes A_r\to A_1\otimes\cdots\otimes A_k$, which satisfy kind of Leibniz identities.

\begin{definition}\label{LL} Let $PolyDer (A^{\oo n}, M)$, for any $A^{\oo n}$-bimodule $M$,  be the space of polyderivations, that is linear maps $\delta:A\otimes ...\oo A\to M$
satisfying the Leibniz identity:
$$
   \delta(a_1\oo...\oo a_i'a_i''\oo...\oo a_n  = (1\oo...\oo a_i'\oo...\oo 1)            \delta(a_1\oo...\oo a_i''\oo...\oo a_n) + \delta(a_1\oo...\oo a_i'\oo...\oo a_n)(1\oo...\oo a_i''\oo...\oo 1)
$$

\end{definition}

\begin{definition}\label{AA} We call a solution of the Maurer-Cartan equation on $A\oplus A^*$ (and a corresponding linear map $\delta:A\otimes A\to A\otimes A$) a {\it restricted polyderivation}, $\delta \in RPolyDer(A^{\oo 2}, A^{\oo 2 })$,  if its
projection to $B$-component
%($namely, those components of two structure which correspond to tensors from $A\otimes %A^*\otimes A\otimes A^*\otimes A)$
is a polyderivation.
\end{definition}

%In next section \ref{Cor1} we give the condition on the A-type component which %characterise situation when the MC solution is a restricted derivation.

Now we can see that  pre-Calabi-Yau structure, that is a cyclicly invariant $A_\infty$-structure on $(A\oplus A^*,  m=\sum\limits_{i=2, i\neq 4}^{\infty}m_i^{(1)})$,
%concentrated in arities two and three $m=m_2^{(1)}+m_3^{(1)}$,
which is a restricted polyderivation gives rise to the Poisson bracket on the space ${\rm Rep}(A,m)$. Moreover the bracket  is ${\rm GL_n}$ invariant.
% or (as in Van den Bergh) ${\rm Rep}(A,m)^{\rm GL}$.

%The key for this is the following result.
This follows from

\begin{theorem}\label{dpb} Pre-Calabi-Yau structure
% of appropriate arities
on $A=(A\oplus A^*,  m=\sum\limits_{i=2, i\neq 4}^{\infty}m_i^{(1)})$, which is additionally a restricted polyderivation
$\delta:A\otimes A\to A\otimes A$, $\delta\in RPolyDer(A^{\otimes 2},A^{\otimes 2})$ %correspond
gives rise to the double Poisson bracket.
\end{theorem}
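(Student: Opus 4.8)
The plan is to attach to the given structure the bracket $\d\cdot,\cdot\c\colon A\otimes A\to A\otimes A$ of Definition~\ref{br}, i.e.\ the one determined by the type-$B$ component $m_3$ through $\langle g\otimes f,\d b,a\c\rangle=\langle m_3(a,f,b),g\rangle$, and to verify the three axioms of a double Poisson bracket. Two of them are recovered verbatim from the proof of Theorem~\ref{main}, while the third is exactly the restricted-polyderivation hypothesis; the only genuinely new work is the bookkeeping that lets those computations be quoted in the present, more general, setting, where $m_3$ need not be purely of type $B$ and the higher operations $m_{\geq 5}$ are allowed.

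First I would dispose of anti-symmetry. The chain of equalities establishing it in Theorem~\ref{main} uses only the cyclic invariance \eqref{cyc1}, the symmetry \eqref{sym2} of the inner form, and the numerical values $|f|'=0$ for $f\in A^*$ and $|a|'=-1$ for $a\in A$ in the chosen grading. None of these ingredients detects the type-$A$ or secondary components of $m_3$, so the identical computation yields $\langle f\otimes g,\d b,a\c\rangle=-\langle g\otimes f,\d a,b\c\rangle$ in our situation as well.

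Next, double Jacobi. The decisive observation is that the arity-five part of $[m,m]$ collects only compositions $m_p\circ m_q$ with $p+q=6$; since $m_1=0$ and $m_4=0$, this forces $p=q=3$, so the higher operations permitted in $m=\sum_{i=2,\,i\neq 4}^{\infty}m_i^{(1)}$ contribute nothing to $MC_5$. By Proposition~\ref{121121} the projection of the solution onto its $B$-component is again a solution of $MC_5$, so I may assume $m_3$ purely of type $B$ and then replay the computation from the proof of Theorem~\ref{main} on the input $a,f,b,g,c$: Lemma~\ref{trb} rewrites each of the three summands of \eqref{maca50} as a triple-bracket term, and matching the resulting permutations of $f,g,h$ against $\tau_{(123)}$ and $\tau_{(132)}$ produces \eqref{Jac}. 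I expect this to be the main obstacle of the proof, since one must check that passing to the type-$B$ projection neither breaks the $MC_5$ identity nor lets type-$A$, secondary, or higher terms re-enter; this is precisely what $m_4=0$ together with Proposition~\ref{121121} secures.

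Finally, double Leibniz is supplied by hypothesis rather than by $MC_4$. When the structure is not purely of type $B$, the arity-four equation on the input $a,b,f,c$ couples the type-$B$ bracket to the type-$A$ part of $m_3$ through the terms $m_3(a,b,f)c$ and $m_3(a,b,fc)$, so Leibniz does not follow from $MC_4$ alone; this is the gap closed by Definition~\ref{AA}. To say that the structure is a restricted polyderivation is exactly to say that the associated $\delta\colon A\otimes A\to A\otimes A$ belongs to $PolyDer(A^{\oo 2},A^{\oo 2})$, i.e.\ satisfies the Leibniz identity of Definition~\ref{LL}, which is the double Leibniz axiom \eqref{Leib}. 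Combining anti-symmetry, double Jacobi and this double Leibniz, I would conclude that $\delta=\d\cdot,\cdot\c$ satisfies \eqref{sym}, \eqref{Leib} and \eqref{Jac}, hence is a double Poisson bracket, which is the assertion of the theorem.
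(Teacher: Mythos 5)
Your proof is correct and takes essentially the same route as the paper's: antisymmetry comes for free from cyclic invariance of the inner form, the double Jacobi identity is obtained by projecting the $MC_5$ solution onto its type-$B$ component via Proposition~\ref{121121} and replaying the computation of Theorem~\ref{main}, and double Leibniz is supplied exactly by the restricted-polyderivation hypothesis, which is imposed precisely because the projection argument fails for $MC_4$. Your explicit observation that $m_1=m_4=0$ forces the arity-five part of $[m,m]$ to consist only of $m_3\circ m_3$ compositions, so that the higher operations $m_{\geq 5}$ cannot interfere, is a useful detail the paper leaves implicit.
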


\begin{proof}
As we have shown  in Theorem~\ref{main}, one can construct a double Poisson brackets
from
%that there is one-to-one correspondence between
a pre-Calabi-Yau structures of type $B$
%and double Poisson brackets.
 We now want to show that for any solution of $MC_5$, its projection to the
space of solutions of type $B$  is also a solution. Thus this projection will satisfy double Jacobi identity and thus create a double Poisson bracket from any pre-CY structure, which is a restricted polyderivation.

This comes from the consideration of Section~2 on the structure of equations arising from $MC_5$. We showed that  all equations but those which are entirely on operations of type $B$ contain in each
term at least one operation of type $A$. Hence if we replace in a given solution of the Maurer-Cartan equation all $Y$s (corresponding to operations of type $A$) by zero, we get
all equations with $Y$ in them automatically satisfied. System of equations arising from $MC_5$ turns into its restriction to those equations which are on operations of type
 $B$ only. The latter, as we know (Theorem~\ref{main}) under the
assignment
$$
\langle g \oo f, \d a,b\c\rangle:=\langle g,m_3(a,f,b)\rangle
$$
(which automatically has antisymmetry) coincides with the double Jacobi identity.

Unfortunately, the analogous procedure of projection of any solution onto the components of type $B$ does not work in the same way for $MC_4$. This is why
we additionally asked for our arbitrary pre-Calabi-Yau structure to be a restricted polyderivation. Now the type $B$ component of any Maurer-Cartan solution gives us a double
Poisson bracket.
\end{proof}

\begin{corollary}\label{PCY} Any pre-Calabi-Yau structure of an arbitrary associative algebra $A$, given by an $A_{\infty}$-structure on $(A\oplus A^*,  m=\sum\limits_{i=2, i\neq 4}^{\infty}m_i^{(1)})$, which is
a restricted polyderivation $\rho:A\otimes A\to A\otimes A$ on ternary level, induces a Poisson structure on representation spaces ${\rm Rep}(A,n)$, which is $Gl_n$-invariant.
\end{corollary}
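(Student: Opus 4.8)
The plan is to deduce the statement from Theorem~\ref{dpb} together with the passage from double Poisson brackets to honest Poisson brackets on representation spaces due to Van den Bergh~\cite{VdB}. By Theorem~\ref{dpb}, the hypothesis that the pre-Calabi-Yau structure is a restricted polyderivation on the ternary level guarantees that the projection of the structure to its type $B$ component is a genuine double Poisson bracket $\d\cdot,\cdot\c:A\otimes A\to A\otimes A$, i.e. it satisfies anti-symmetry~(\ref{sym}), double Leibniz~(\ref{Leib}) and double Jacobi~(\ref{Jac}). It therefore remains to show that every double Poisson bracket on $A$ induces a $Gl_n$-invariant Poisson bracket on ${\rm Rep}(A,n)$, which is exactly Van den Bergh's construction; I would recall it and verify that the pre-Calabi-Yau origin of the bracket plays no further role.

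First I would set up the coordinate ring of the representation scheme. For $a\in A$ and indices $1\leq i,j\leq n$ let $a_{ij}$ be the regular function sending a representation $\rho$ to the $(i,j)$-entry of the matrix $\rho(a)$; these functions generate $\mathcal{O}({\rm Rep}(A,n))$, subject to the relations $(ab)_{ij}=\sum_s a_{is}b_{sj}$ and linearity. I would then define a bracket on generators by the Van den Bergh formula
$$
\{a_{ij},b_{kl}\}=\sum\,(\d a,b\c')_{kj}\,(\d a,b\c'')_{il},
$$
where $\d a,b\c=\sum \d a,b\c'\otimes\d a,b\c''$ is written in Sweedler notation, and extend it to all of $\mathcal{O}({\rm Rep}(A,n))$ by bilinearity and the Leibniz rule.

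The substantive content is then to match each double Poisson axiom to the corresponding property of $\{-,-\}$. Anti-symmetry~(\ref{sym}) of $\d\cdot,\cdot\c$ gives $\{a_{ij},b_{kl}\}=-\{b_{kl},a_{ij}\}$; the double Leibniz identity~(\ref{Leib}) is precisely what makes the formula compatible with the relations $(ab)_{ij}=\sum_s a_{is}b_{sj}$, so that $\{-,-\}$ is well defined on $\mathcal{O}({\rm Rep}(A,n))$ and obeys the ordinary Leibniz rule; and the double Jacobi identity~(\ref{Jac}) yields the ordinary Jacobi identity. For $Gl_n$-invariance I would observe that the defining formula is assembled solely from matrix entries and index contractions and is hence equivariant for the conjugation action $\rho\mapsto g\rho g^{-1}$; consequently the bracket restricts to a Poisson bracket on the invariant subalgebra $\mathcal{O}({\rm Rep}(A,n))^{Gl_n}$, as claimed.

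I expect the only real obstacle to be the derivation of the ordinary Jacobi identity from the double Jacobi identity~(\ref{Jac}): this requires unpacking the two cyclic permutations $\tau_{(123)}$, $\tau_{(132)}$ appearing there into sums over matrix indices and matching them against the six terms of the ordinary Jacobiator $\{\{a_{ij},b_{kl}\},c_{pq}\}+\text{(cyclic)}$. The bookkeeping of which Sweedler leg lands in which matrix slot is the delicate point and is the heart of Van den Bergh's theorem; since that theorem is established in \cite{VdB}, I would invoke it for this step rather than reproduce the index computation, our own contribution being the construction of the double bracket itself out of the pre-Calabi-Yau data via Theorem~\ref{dpb}.
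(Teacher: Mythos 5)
Your proposal is correct and takes essentially the same route as the paper: the paper's own proof is exactly the two-step reduction you describe, first applying Theorem~\ref{dpb} to extract a double Poisson bracket from the restricted-polyderivation hypothesis, and then invoking Van den Bergh's construction from \cite{VdB} to obtain the $Gl_n$-invariant Poisson bracket on ${\rm Rep}(A,n)$. The extra detail you supply --- the explicit formula $\{a_{ij},b_{kl}\}=\sum (\d a,b\c')_{kj}\,(\d a,b\c'')_{il}$ on matrix entries and the matching of the double Poisson axioms to the ordinary Poisson axioms --- is a faithful account of Van den Bergh's theorem, which the paper simply cites without reproducing.
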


\begin{proof}
It comes as a direct consequence of Van den Bergh's construction for double Poisson bracket \cite{VdB}, after the application of Theorem~\ref{dpb}.
\end{proof}

{\bf Remark.}
  In spite for general  pre-CY structure ($m_4$ non necessary zero),  we have from $MC_5$ not precisely the Jacobi identity, but only up to certain terms involving cyclic tensor $A \oo A^* \oo A \oo A^* \oo A$, the noncommutative Poisson structure
  still can be constructed and the Poisson structure
  on representation spaces induced (however it requiters some restrictions on $A$), as it was shown in \cite{IK2}.
  %as it follows from the construction described in the first part of the paper.
  In case $m_4=0$ the situation is much more transparent, and it shows clearly how the correcting term for the Jacobi identity appear, so we consider it separately in this paper.  Moreover, if $m=m_2+m_3$ we have even one-to-one correspondence between pre-CY-structures and double Poisson brackets.

{\bf Remark.}
 Note that there was a considerable freedom in the choice of definition for the double bracket via the pre-Calabi-Yau structure (in spite it is in many ways defined by various features of the double bracket), but only the one presented in definition~\ref{br}, together with appropriate choice of the grading on $A\oplus A^*$ allowed to deduce the axioms of double bracket. By this choice we thus found an embedding of a double Poisson structures into pre-Calabi-Yau structures.

 \vskip3mm

 % In this section we show that

 % To show this we will substitute the Lie algebra of higher Hochschild complex $ %C^{(\bullet)} (A)$ with its subalgebra.
 % We construct corresponding complex by tenzoring $n$ copies of the canonical short   % exact sequence of A-bimodules:

%  $$ 0 \to \Omega \to A\oo A  \to A$$

 % where the last map is a multiplication $m_2$ in $A$, and $\Omega$ is the % multiplication kernel. Then we take

%  \begin{equation}
% Hom_{A^{\oo n}-mod-A^{\oo n}} (\otimes_{i=1}^{n} (\otimes_{j=1}^{N} A\oo \partial_j %\oo A \to A\oo \xi \oo A), \sigma A^{\oo n} ).
%  \end{equation}

 % This is subcomplex of the Hochschild complex.
 % %, and the embedding preserves the homology.
 % We need to check that it is a subalgebra under the induced  Lie bracket. ...
 % Then the next step will be to show that

\section{Acknowledgements}
We are grateful to IHES and MPIM where this work partially was done for hospitality and support.
%, and excellent research atmosphere.
%We would like to thank anonymous referees for careful reading and useful comments.
This work is funded by the ERC grant 320974, and partially supported by the project PUT9038 and EPSRC grant EP/M008460/1/.

 %We would like to thank Michael Wemyss for communicating to us a number of interesting %questions on potential algebras and shearing his insights.
 %We  are grateful  to the anonymous referees for careful reading and many useful %comments, which helped to improve the text.

% This work is funded by the ERC grant 320974.

%\S 5. 'Non-graded' case.

%For clear comparison with the Van den Bergh non-graded version of double Poisson %bracket.

%We consider here the case of the simplest grading when the entire $A$ is concentrated %in one degree.

%ZZZ
%$$
%we used here the Koszul rule and our grading to specify signs.

%The Leibniz and Jacobi identity for this grading do coincide with identifying with %presented in the first part of the Van den Bergh paper \cite{VdB}.

\vspace{5mm}

%\centerline{Natalia Iyudu}

\vspace{35mm}

\scshape

\noindent   Natalia Iyudu

\noindent School of Mathematics

\noindent  The University of Edinburgh

\noindent James Clerk Maxwell Building

\noindent The King's Buildings

\noindent Peter Guthrie Tait Road

\noindent Edinburgh

\noindent Scotland EH9 3FD

\noindent E-mail address:\quad { n.iyudu@ed.ac.uk }\\

%\vspace{13mm}

\scshape

\noindent Maxim Kontsevich

\noindent  Institut des Hautes \'Etudes Scientifiques

\noindent 35 route de Chartres

\noindent F - 91440 Bures-sur-Yvette
%Tel - 33 (0)1 60 92 66 00

\noindent  E-mail address:\quad { maxim@ihes.fr}\\

\scshape

\noindent Yannis Vlassopoulos

\noindent  Institut des Hautes \'Etudes Scientifiques

\noindent 35 route de Chartres

\noindent F - 91440 Bures-sur-Yvette
%Tel - 33 (0)1 60 92 66 00

\noindent  E-mail address:\quad { yvlassop@ihes.fr}\\

%\vskip 11mm

%\vskip1truecm

\end{document}